\newcommand{\C}{\mathbf{C}}
\newcommand{\R}{\mathbf{R}}
\newcommand{\Z}{\mathbf{Z}}
\newcommand{\Sph}{\mathbf{S}}
\newcommand{\Hyp}{\mathbf{H}}
\def\a{\alpha}
\def\b{\beta}
\def\g{\gamma}
\def\d{\delta}
\def\e{\varepsilon}
\def\l{\lambda}
\def\f{\varphi}
\def\s{\sigma}
\newcommand{\teich}{\mathcal{T}}
\newcommand{\ml}{\mathcal{ML}}
\newcommand{\Hold}{\mathcal{H}}
\newcommand{\Isom}{\mathsf{Isom}}
\newcommand{\PSL}{\mathsf{PSL}}
\newcommand{\SL}{\mathsf{SL}}
\newcommand{\Pl}{\mathcal{P}}
\newcommand{\mrm}{\mathrm}
\newcommand{\mc}{\mathcal}
\theoremstyle{plain}
\newtheorem{theorem}{Theorem}[section]
\newtheorem{proposition}[theorem]{Proposition}
\newtheorem{lemma}[theorem]{Lemma}
\theoremstyle{definition}
\newtheorem{question}{Question}[section]
\newtheorem{remark}{Remark}[section]
\newtheorem*{remarknonumber}{Remark}
\newtheorem*{remarksnonumber}{Remarks}
\newtheorem{consequence}{Consequence}[section]
\title[]{Derivatives of length functions and shearing coordinates on Teichm\"uller spaces}
\thanks{This work has been fully supported by FIRB 2010 (RBFR10GHHH$_{003}$).}
\begin{document}
\maketitle
 \begin{small}
\begin{flushleft} 
 \textbf{Matthieu Gendulphe}\\
 Dipartimento di Matematica Guido Castelnuovo\\
 Sapienza Universit\`a di Roma\\
 Piazzale Aldo Moro 5\\
 00185 Roma\\
 email : matthieu@gendulphe.com
\end{flushleft}\end{small}
\medskip

\begin{abstract}
Let $S$ be a closed oriented surface of genus at least $2$, and denote by $\teich(S)$ its Teichm\"uller space. For any isotopy class of closed curves $\g$, we compute the first three derivatives of the length function $\ell_\g:\teich(S)\rightarrow\R_+$ in the shearing coordinates associated to a maximal geodesic lamination $\l$. We show that if $\g$ intersects each leaf of $\l$, then the Hessian of $\ell_\g$ is positive-definite. 
We extend this result to length functions of measured laminations. We also provide a method to compute higher derivatives of length functions of geodesics. We use Bonahon's theory of transverse H\"older distributions and shearing coordinates.
\end{abstract}

\renewcommand{\abstractname}{R\'esum\'e} 
\begin{abstract}
Soit $S$ une surface ferm\'ee orientable de type hyperbolique, et soit $\teich(S)$ son espace de Teichm\"uller. \'Etant donn\'ee une classe d'homotopie libre de courbes ferm\'ees $\g$, nous calculons les trois premi\`eres d\'eriv\'ees de la fonction longueur $\ell_\g:\teich(S)\rightarrow \R_+$ dans les coordonn\'ees \emph{shearing} associ\'ees \`a une lamination g\'eod\'esique maximale $\l$ de $S$. Nous montrons que si $\g$ intersecte toutes les feuilles de $\l$, alors le hessien de $\ell_\g$ est d\'efini-positif. Nous \'etendons ce r\'esultat aux fonctions longueur des laminations mesur\'ees. Nous donnons aussi une m\'ethode pour calculer les d\'eriv\'ees d'ordre sup\'erieur de $\ell_\g$. Nos preuves utilisent les travaux de Bonahon sur les coordonn\'ees \emph{shearing}. 
\end{abstract}
\vspace{1cm}

\setcounter{tocdepth}{1}   
\tableofcontents

\newpage
\section{Introduction}

\subsection*{Statement of the results}
 Let $S$ be a  closed connected oriented surface with negative Euler characteristic $\chi(S)<0$. Its \emph{Teichm\"uller space} $\teich(S)$ is the space of hyperbolic metrics on $S$ up to isotopy. It is a smooth manifold of dimension $-3\chi(S)$. Given a non trivial isotopy class of closed curves $\g$, there is a smooth \emph{length function} $\ell_\g:\teich(S)\rightarrow\R_+^\ast$ that associates to a point $[m]$ in $\teich(S)$ the length $\ell_\g(m)$ of the unique $m$-geodesic in $\g$. These length functions play a crucial role in low-dimensional topology and geometry.\par

 In this article, we study the derivatives of length functions in the shearing coordinates. Given a maximal geodesic lamination $\l$ on $S$, Bonahon (\cite{bonahon-toulouse}) realized $\teich(S)$ as an open convex cone $\mc C(\l)$ in the linear space $\Hold(\l;\R)$ of transverse H\"older distributions for $\l$ . The linear structure of $\Hold(\l;\R)$ is meaningful in terms of hyperbolic geometry. If $\mu$ is a transverse measure for $\l$, the trajectories of the earthquake flow directed by $\mu$ are affine lines in the shearing coordinates. Thus, the linear flow in $\mc C(\l)$ can be seen as  a generalization of the earthquake flow.\par

 Our main theorem gives explicit formulas for the first and second derivatives of length functions in the shearing coordinates:
  
\begin{theorem}
Let $[m]$ be a point in $\teich(S)$, $\l$ be a maximal geodesic lamination, and $\g$ be a closed geodesic transverse to $\l$.
For any transverse H\"older distribution $\a\in \Hold(\l;\R)$, we have
\begin{eqnarray*}
(\mrm d \ell_{\g})_{[m]}(\a)  & = & \int_\g \cos \theta_p\  \mrm d \a(p), \\
(\mrm d^2 \ell_{\g})_{[m]}(\a,\a)  & = & \frac{1}{2\sinh\frac{\ell_\g}{2}} \int_{\g}\int_\g \cosh\left(\frac{\ell_\g}{2}-\ell_{pq}\right) \sin \theta_p \sin \theta_q       \ \mrm d \a(p)\mrm d \a(q),
\end{eqnarray*}
where we denote by $\theta_p$ the $m$-angle at $p$ from $\g$ to the leaf of $\l$ passing through $p$ following the orientation of $S$, and by $\ell_{pq}$ the $m$-length of any of the two segments of $\g$ bounded by $p$ and $q$.
\end{theorem}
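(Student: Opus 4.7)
My plan is to verify both identities first in the special case where $\a = \mu$ is a transverse measure for $\l$, and then extend to general Hölder distributions by continuity. When $\a = \mu$, the linear path $t \mapsto [m] + t\mu$ in the shearing chart $\mc C(\l)$ coincides with the earthquake path $E^t_\mu([m])$; in this case the first derivative formula reduces to Wolpert's classical formula for the variation of $\ell_\g$ along an earthquake, and the second derivative formula is of the type derived by Kerckhoff in his proof of the convexity of length functions along earthquake paths. Concretely, lifting $\g$ to the universal cover and expanding $\rho_t(\g) \in \PSL(2,\R)$ as an alternating product of elementary shear matrices (one per crossing of $\g$ with a leaf of $\l$) and plaque holonomies between consecutive crossings, the first and second derivatives of $\trace\rho_t(\g)$ in $t$ become, respectively, a single and a double integral against $\mu$. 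The identity $\ell_\g = 2\arccosh(\tfrac{1}{2}\trace\rho(\g))$ then converts these into the stated integrals, with the factor $1/(2\sinh(\ell_\g/2))$ arising from the derivative of $\arccosh$ at $\tfrac{1}{2}\trace\rho(\g) = \cosh(\ell_\g/2)$.

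To extend the identities from transverse measures to arbitrary $\a \in \Hold(\l;\R)$, I would exploit continuity. Both sides of each identity depend continuously on $\a$ of the expected algebraic degree: the right hand side of the first formula is a continuous linear functional in $\a$ because $p \mapsto \cos\theta_p$ is Hölder continuous on $\g$, while the right hand side of the second is a continuous quadratic form in $\a$ because the kernel $(p,q) \mapsto \cosh(\ell_\g/2 - \ell_{pq})\sin\theta_p\sin\theta_q$ is Hölder continuous on $\g \times \g$ and therefore pairs continuously with the product distribution $\a\otimes\a$. The left hand sides $(\mrm d\ell_\g)_{[m]}$ and $(\mrm d^2\ell_\g)_{[m]}$ are similarly continuous in $\a$ since $\ell_\g$ is smooth in the shearing coordinates. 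Invoking Bonahon's density of finite linear combinations of transverse measures in $\Hold(\l;\R)$ then promotes the identity on measures to all $\a$. Should this density statement prove too coarse, a fallback is to carry out the infinite-product calculation directly in the Hölder setting, using the fact (proved by Bonahon) that the relevant infinite matrix products converge absolutely thanks to the Hölder regularity of $\a$ combined with the exponential contraction of plaques.

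The main technical obstacle I anticipate is the calculation of the second derivative, specifically the identification of the hyperbolic kernel $\cosh(\ell_\g/2 - \ell_{pq})$. This kernel should arise from the trace of the product of two elementary shears conjugated by the translation along the axis of $\rho(\g)$ by the signed distance corresponding to $\ell_{pq}$, combined with the factorization of $\arccosh$ and the cyclicity $\trace(AB) = \trace(BA)$; the latter accounts for the symmetry of the integrand in $p$ and $q$, which in turn yields the "any of the two segments" ambiguity in the definition of $\ell_{pq}$. Carrying out this computation cleanly, in particular keeping track of signs and avoiding spurious terms arising from the commutator of the two shears, is the step I expect to require the most care.
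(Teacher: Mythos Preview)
Your approach has a genuine gap: the density claim you rely on is false. For a maximal geodesic lamination $\l$, the cone of transverse measures has dimension at most $-\tfrac{3}{2}\chi(S)$, whereas $\Hold(\l;\R)$ has dimension $-3\chi(S)$; generically $\l$ carries a unique transverse measure up to scale, so the span of transverse measures is a \emph{proper} subspace of $\Hold(\l;\R)$. There is no ``Bonahon density'' of linear combinations of transverse measures in $\Hold(\l;\R)$. Thus verifying the formulas along earthquake paths (i.e.\ for $\a=\mu$ a transverse measure) and extending by continuity recovers only the restriction of $(\mrm d\ell_\g)_{[m]}$ and $(\mrm d^2\ell_\g)_{[m]}$ to that proper subspace, which is exactly what Kerckhoff and Wolpert already did and exactly what the theorem is meant to go beyond.

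Your fallback is closer to what actually works, but the paper's implementation is rather different from what you sketch. The paper does not compute the derivatives via traces of matrix products in $\PSL(2,\R)$. Instead it approximates $\a$ \emph{locally} --- restricted to a single transverse arc $k$, not in $\Hold(\l;\R)$ --- by a sequence $(\a_n)_n$ of finite linear combinations of Dirac masses at leaves crossing $k$ (these $\a_n$ are not transverse measures for $\l$ and are not even $\pi_1(S)$-invariant). Each $\a_n$ corresponds to shearing along finitely many geodesics in the universal cover, and the derivative of $\ell_\g$ along such a deformation is computed by elementary hyperbolic geometry using Jacobi fields: one tracks how the axis of $\rho^{t\a_n}(\g)$ rotates about a fixed midpoint, which yields the variation of the intersection angles $\theta_p$ and hence of $\cos\theta_p$. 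The key analytic input is that $(\a_n)_n\to\a$ in the dual of each $H_\nu(k)$ (Bonahon's estimates on plaque widths and cocycle growth), so that integrals of H\"older functions like $\cos\theta_p$ against $\a_n$ converge to the integral against $\a$, and similarly for the double integral. The hyperbolic kernel $\cosh(\ell_\g/2-\ell_{pq})$ arises not from a trace identity but from the Jacobi field computation of $\tfrac{\mrm d}{\mrm dt}\cos\theta_l$ when shearing along a single geodesic.
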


\begin{remarksnonumber}\begin{enumerate}
\item We identify the tangent space $T_{[m]}\teich(S)$ with $\Hold(\l;\R)$.
\item The support of $\a$ on $\g$ is contained in $\g\cap\l$, so that the formulas above make sense.
\item When $\g$ is a closed leaf of $\l$, then its length is given by the Theorem~E of \cite{bonahon-toulouse}.
\end{enumerate}
\end{remarksnonumber}

 These are generalizations of classical formulas for the first and second derivatives of length functions along earthquake deformations (Kerckhoff \cite{kerckhoff}, Wolpert \cite{wolpert-derivative,wolpert-symplectic}). Note that a generic geodesic lamination admits only one transverse measure (up to multiplication by a positive scalar), therefore the formulas of Kerckhoff and Wolpert give (generically) the derivatives of length functions for only one direction in the shearing coordinates.\par

We also provide a method to compute recursively the higher derivatives of $\ell_\g$, and we give an explicit formula for the third derivative (\textsection \ref{sec:proof}). It seems possible to find a closed formula for all derivatives using our method. Recently, Bridgeman (\cite{bridgeman}) gave a closed formula for all derivatives of $\cosh(\ell_\g/2)$ along twist deformations. Our method of computation is different in that we use Jacobi fields instead of product of matrices in $\SL(2,\C)$.\par 

We extend the formula for the first derivative to length functions of measured laminations (Theorem~\ref{thm:laminations}), and we show that: 

\begin{theorem}\label{thm:2}
If $(\g,\mu)$ is a measured lamination that intersects each leaf of $\l$, then the Hessian of $\ell_{(\g,\mu)}$ in the shearing coordinates is everywhere positive-definite.
\end{theorem}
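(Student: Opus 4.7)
The plan is to first extend the Hessian formula of Theorem~1 to measured laminations, then to recognise the resulting kernel as that of a positive operator, and finally to use the intersection hypothesis to rule out null directions. For a weighted multicurve $(\g,\mu)=\sum c_i\delta_{\g_i}$ the Hessian is $\sum c_i\,\mathrm{Hess}(\ell_{\g_i})$, and each summand is given by the double integral of Theorem~1. For a general measured lamination, I would approximate $(\g,\mu)$ by weighted multicurves in the weak-$\ast$ topology and pass to the limit, using joint continuity of the first two derivatives of the length in $(m,(\g,\mu))$, to obtain a formula of exactly the same shape but with the single integration over $\g$ replaced by an integration over the leaves of $\g$ weighted by $\mu$.

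The crucial algebraic observation is that the kernel
$$K(s,t) := \frac{\cosh(\ell_\g/2 - d(s,t))}{2\sinh(\ell_\g/2)},$$
with $d$ the circular distance on $\R/\ell_\g\Z$, is precisely the Green's kernel of the self-adjoint positive operator $-\partial_s^2+1$ on the circle of circumference $\ell_\g$. Setting $f := \sin\theta\cdot\a|_\g$, regarded as a signed measure on $\g$, the Theorem~1 formula rewrites as
$$(\mrm d^2\ell_\g)_{[m]}(\a,\a) \;=\; \bigl\langle f,\,(-\partial_s^2+1)^{-1}f\bigr\rangle_{L^2(\g)}\ \geq\ 0,$$
with equality iff $f\equiv 0$. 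Integrating this pointwise positivity leafwise against $\mu$ then yields positive semi-definiteness of $\mathrm{Hess}\,\ell_{(\g,\mu)}$.

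To upgrade to strict positive-definiteness, suppose $(\mrm d^2\ell_{(\g,\mu)})_{[m]}(\a,\a)=0$ for some $\a\in\Hold(\l;\R)$. The previous step forces $\sin\theta\cdot\a$ to vanish along $\mu$-almost every leaf of $\g$. The hypothesis that $(\g,\mu)$ crosses every leaf of $\l$ prevents $\g$ from sharing any leaf with $\l$---otherwise the shared leaf, being itself a leaf of $\g$, could not be transversely crossed by any other (disjoint) leaf of $\g$---so $\g$ is everywhere transverse to $\l$ and $\sin\theta\ne 0$ on $\g\cap\l$. Hence $\a$ itself vanishes along $\mu$-a.e.\ leaf of $\g$. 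I would then conclude $\a=0$ in $\Hold(\l;\R)$ by invoking Bonahon's description of $\Hold(\l;\R)$ in terms of its values on transversals to a train track carrying $\l$, together with the fact that leaves of $(\g,\mu)$ collectively intersect every leaf of $\l$ and hence furnish a sufficient family of transversals to read off all the branch weights of $\a$.

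The main obstacle will be this last injectivity step: producing, from the intersection hypothesis, enough transversal data drawn from leaves of $\g$ to recover the full H\"older weight system of $\a$ on a train track carrying $\l$. Everything else reduces either to a direct computation or to a standard limiting argument from the case of weighted multicurves.
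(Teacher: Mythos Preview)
Your Green's-function observation for a single closed geodesic $\g$ is correct and is a genuinely different route from the paper's. The kernel $K(s,t)=\cosh(\ell_\g/2-\ell_{pq})/(2\sinh(\ell_\g/2))$ really is the Green's function of $-\partial_s^2+1$ on $\R/\ell_\g\Z$, so the main-theorem formula rewrites as $\langle f,(-\partial_s^2+1)^{-1}f\rangle\geq 0$, with equality exactly when the distribution $f=\sin\theta\cdot\a|_\g$ vanishes. (One quibble: $\a$ is a transverse H\"older distribution, not a transverse measure, so $f$ is not a ``signed measure''; the pairing is still well defined because $K$ is Lipschitz.) The paper instead writes the Hessian for the finite approximation $\a_n$ as $B^{t}HB$ with $H_{ij}=\cosh(\ell_\g/2-\ell_{p_ip_j})$, notes that $H$ has nonnegative entries and strictly maximal diagonal, and applies an elementary lemma on such matrices. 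Your argument is cleaner conceptually; the paper's buys an \emph{effective} lower bound, which turns out to be the crucial point.

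The genuine gap is the passage to a measured lamination $(\g,\mu)$. You propose to obtain a Hessian formula ``of exactly the same shape'' and then ``integrate the pointwise positivity leafwise against $\mu$''. No such formula is available here---the paper explicitly flags (Remark after Theorem~\ref{thm:laminations}) that writing the second derivative for a lamination would first require extending $\ell_{pq}$ and analysing its regularity, which is not done. More to the point, your leafwise argument cannot work as written: a generic leaf of $\g$ is an infinite geodesic, there is no circle of circumference $\ell_\g$, and the Green's-function picture on $\R/\ell_\g\Z$ has no direct analogue. What Theorem~\ref{thm:laminations} does give you is that $(\mrm d^2\ell_{(\g,\mu)})_{[m]}(\a,\a)$ is a limit of nonnegative quantities, hence $\geq 0$; but since the eigenvalues of $(-\partial_s^2+1)^{-1}$ on a circle of length $L$ tend to $0$, your identity yields no lower bound that is uniform along the approximating sequence, and so says nothing about the equality case in the limit.

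The paper avoids this by extracting, already for each approximating closed geodesic, an explicit lower bound that survives the limit: using the isolated intersection points $x_i\in\g\cap\l$ and their gaps $\e_i>0$, one gets $(\mrm d^2\ell_\g)_{[m]}(\a^2)\geq\sum_i\sinh(\ell_\g/2-\e_i)\,\e_i\,\a(x_i)^2\sin^2\theta_{x_i}$. The right-hand side depends only on data---the $\a$-weights across isolated leaves of $\l$ and the $\mu$-mass they carry---that passes to the limit, yielding Proposition~\ref{lem:hessian}. Definiteness then comes from the fact that the isolated leaves of a maximal $\l$ supply enough transversals to detect any nonzero $\a\in\Hold(\l;\R)$. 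Your closing ``injectivity'' intuition is thus on the right track, but to make the whole argument go through you need a quantitative lower bound to carry across the limit, and the Green's-function identity alone does not provide one.
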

 
\begin{remarknonumber}
This is not an obvious consequence of the previous theorem, for positive transverse H\"older distributions are exactly transverse measures (Bonahon \cite[Proposition~18]{bonahon-topology}).
\end{remarknonumber} 

Actually, we give an effective lower bound for the Hessian of $\ell_{(\g,\mu)}$ (Proposition~\ref{lem:hessian}). It was already known that, under the hypothesis of the theorem, the function $\ell_{(\g,\mu)}$ is strictly convex in the shearing coordinates (Bestvina-Bromberg-Fujiwara-Souto \cite{bromberg} for the case of closed geodesics, and Th\'eret \cite{theret} for the case of measured laminations).\par
 
 As well-known, the Teichm\"uller space admits a noncomplete metric of negative sectional curvature, called the \emph{Weil-Petersson} metric. Recently, Wolf (\cite{wolf}) found an explicit formula for the Weil-Petersson Hessian of a geodesic length function, which was already known to be positive-definite (Wolpert \cite[Theorem~4.6]{wolpert-jdg}). As observed by G. Mondello, one part of Wolf's formula is very similar to our formula. This could be explained as follows: this part of Wolf's formula comes from the second variation of the geodesic seen as a curve on $S$ (\cite[\textsection2]{wolf}), and the idea of our computations (\textsection \ref{sec:computations}) is precisely to move the endpoints of a geodesic arc on a fixed hyperbolic surface.\par
 
  In  this article, we only deal with the closed oriented surface $S$. However, our results extend to any compact surfaces with negative Euler characteristic by considering a double cover which is closed and orientable. 
    
\subsection*{Ideas and heuristic}
 There are different presentations of the shearing coordinates (see for instance \cite{bromberg}). We use the one given by Bonahon (\cite{bonahon-toulouse}), where the shearing coordinates of a hyperbolic metric are encoded by a transverse H\"older distribution. This is a crucial point as our proofs are based on the fact that \emph{a transverse H\"older distribution $\a\in\Hold(\l;\R)$ is locally approximated by a sequence $(\a_n)_n$ of linear combinations of Dirac measures}. Here \emph{locally} means that this approximation works only for the restriction of $\a$ to a given arc. In particular, it is not an approximation in $\Hold(\l;\R)$. Our main theorem comes quickly once we have clearly stated this approximation (Lemma~\ref{lem:convergence}).\par
 
 Let us give some heuristic proof of the main theorem. We identify the geodesic $\g$ with an element of $\pi_1(S)$ still denoted $\g$. Let $(A,m_A)$ be the hyperbolic annulus which is the cover of $(S,m)$ with respect to the subgroup $\langle\g\rangle$ of $\pi_1(S)$. The leaves of $\l$ that intersect $\g$ lift to a lamination $\l_A$ of $(A,m_A)$. Working in the annulus $(A,m_A)$ has two advantages:
 \begin{enumerate}
 \item in the space $\Hold(\l_A;\R)$ the H\"older distribution which is the restriction of $\a$ to $\g$ can be approximated by a sequence $(\a_n)_n$ of linear combinations of Dirac measures. So, for any H\"older function $f$ on $\g$, we have $\int_\g f \mrm d\a_n\rightarrow\int_\g f\mrm d \a$ as $n$ tends to infinity.
 \item there is a bijection between the leaves of $\l_A$ and the intersection points of $\l$ with $\g$.
\end{enumerate}
The H\"older distribution $\a_n$ can be written $\a_n=a_1\delta_{l_1}+\ldots +a_n\delta_{l_n}$ where the $l_i$'s are some leaves of the lamination $\l_A$. We associate to $\a_n$ a deformation $t\mapsto m^{t\a_n}_A$ of $m_A$ obtained by shearing along each $l_i$ by an amount equal to $ta_i$. The derivative of $\ell_\g$ along the this deformation is given by $\left[\frac{\mrm d}{\mrm d t} \ell_\g(m^{t\a_n}_A)\right]_t=\int_\g \left[\cos\theta_p\right]_{m^{t\a_n}_A} \mrm d\a_n(p)$, where the bracket means that the angle is evaluated with respect to the metric $m_A^{t\a_n}$. From the work of Bonahon, we find that $\left[\frac{\mrm d}{\mrm d t} \ell_\g(m^{t\a_n}_A)\right]_t$ converges uniformly to $\left[\frac{\mrm d}{\mrm d t} \ell_\g(m^{t\a}_A)\right]_t$ as $n$ tends to infinity,  where $t\mapsto m_A^{t\a}$ is the deformation defined by $\a$. Using the convergence of $(\a_n)_n$ towards $\a$, we conclude that  $\left[\frac{\mrm d}{\mrm d t} \ell_\g(m^{t\a}_A)\right]_0=\int_\g\cos\theta_p\mrm d\a$.\par
  
  In the rest of the paper, following Bonahon (\cite{bonahon-toulouse}), we work in the universal cover $(\tilde S,\tilde m)$ instead of the annulus $(A,m_A)$ used in \cite{bromberg}. Note that $\Hold(\l_A;\R)$ and $\teich(A)$ have infinite dimension, and consequently are not well-adapted to differential calculus.

\subsection*{Organization of the paper} 
 We first take some pages to recall Bonahon's theory of transverse H\"older distributions and shearing coordinates (\textsection\ref{sec:distributions}). Then, we define the sequence $(\a_n)_n$, and show its convergence towards $\a$ (\textsection \ref{sec:convergence}). This enables us to compute the derivatives of the length functions in the shearing coordinates, in particular we prove the main theorem (\textsection\ref{sec:proof}). In \textsection\ref{sec:laminations} we consider the possible extensions of these results to length functions of measured laminations. Finally, we show the positivity of the Hessian of the length functions (\textsection\ref{sec:hessian}). We postpone in \textsection\ref{sec:computations} the computations of the derivatives of some geometric quantities along twist deformations.   

\subsection*{Acknowlegments} I would like to thank Andrea Sambusetti for his support. 
 
\section{Transverse H\"older distributions and shearing coordinates}\label{sec:distributions}

Here we introduce transverse H\"older distributions and shearing coordinates following the work of Bonahon (\cite{bonahon-toulouse,bonahon-topology}, see also \cite[\textsection 1]{bonahon-park}). We closely follow his notations, and sometimes his text.

\subsection*{Geodesic laminations}
Given a hyperbolic metrics $m'$ on $S$, the identity map between $(\tilde S,\tilde m)$ and $(\tilde S,\tilde m')$ is a quasi-isometry, which extends to an equivariant homeomorphism between the visual compactifications. As a consequence, the \emph{boundary at infinity} $\tilde S_\infty$ is a purely topological object, and so is the \emph{space of geodesics} of $\tilde S$ defined by
$$G(\tilde S)=(\tilde S_\infty\times\tilde S_\infty-\Delta)/\Z_2 ,$$
where $\Delta$ is the diagonal, and $\Z_2$ acts by permuting the factors. We transparently identify a geodesic with its unordered pair of limit points.\par

  A \emph{geodesic lamination} $\l$ on $S$ is a collection of disjoint simple $m$-geodesics whose union is closed in $S$. Its lift $\tilde \l$ is a $\pi_1(S)$-invariant closed subset of $G(\tilde S)$.

\subsection*{Transverse H\"older distributions}
Let $(X,d)$ be a metric space. A function $f:X\rightarrow\R$ is \emph{H\"older continuous} if there exists some constants $A>0$ and $1\geq \nu>0$ such that:
$$|f(x)-f(y)|\leq A\  d(x,y)^\nu\quad \forall x,y\in X.$$
We denote by $H_\nu(X;K)$ the linear space of H\"older continuous functions with exponent $1\geq \nu>0$, and support contained in a compact subset $K\subset X$. We equip $H_\nu(X;K)$  with the norm $\|\cdot\|_\nu$ defined by:
\begin{eqnarray*}
\| f \|_\nu & = & \|f\|_\infty + \sup_{x,y\in X} \frac{|f(x)-f(y)|}{d(x,y)^\nu}.
\end{eqnarray*}
There is a continuous injection $H_\nu(X;K)\rightarrow H_{\nu'}(X;K')$ for any $\nu\geq \nu'$, and any  $K\subset K'$.
We denote by $H(X)$ the space of H\"older continuous functions on $X$ with compact support, it is the union of all $H_\nu(X;K)$ with $1 \geq\nu>0$ and $K\subset X$ compact. A \emph{H\"older distribution} on $X$ is a linear form on $H(X)$ whose restriction to each $H_\nu(X;K)$ is continuous. A (positive) Radon measure is a very particular example of H\"older distribution.\par

Let $\l$ be a geodesic lamination on $(S,m)$. A \emph{transverse H\"older distributions} $\a$ for $\l$ is a H\"older distribution on each smooth arc transverse to $\l$ such that: if $H$ is a H\"older bicontinuous homotopy preserving $\l$ between two transverse arcs $k$ and $k'$, then $H$ transports the H\"older distribution on $k$ to the one on $k'$. This invariance property implies that the support of the H\"older distribution on $k$ is contained in $\l\cap k$. A transverse H\"older distribution which is a Radon measure on each transverse arc is called a \emph{transverse measure}.\par

 We denote by $\Hold(\l;\R)$ the linear space of transverse H\"older distributions for $\l$. It is finite dimensional, we precisely have $\dim\Hold(\l;\R)=\dim\teich(S)=-3\chi(S)$ when $\l$ is maximal. By comparison, the dimension of the cone of transverse measures for $\l$ does not exceed $-\frac{3}{2}\chi(S)$. \par

\subsection*{Geodesic H\"older currents}
Transverse  H\"older distributions and transverse measures are of topological nature.
To see that they do not depend on the metric $m$, we interpret them as objects defined on the space of geodesics $G(\tilde S)$. This space has a canonical H\"older structure. The $\tilde m$-angle at a point $x$ in $\tilde S$ defines a metric on $\tilde S_\infty$, and consequently on $G(\tilde S)$. A different choice for $m$ or $x$ would change the metric on $G(\tilde S)$, but not its H\"older class.\par

 A \emph{geodesic H\"older current} (resp. a \emph{geodesic current}) on $S$ is a $\pi_1(S)$-invariant H\"older distribution (resp. Radon measure) on $G(\tilde S)$. Given a geodesic lamination $\l$, transverse H\"older distributions (resp. transverse measures) for $\l$ are in one-to-one correspondance with geodesic H\"older currents (resp. geodesic currents) whose support is contained in $\tilde \l$.
 
\subsection*{Shearing coordinates}
Let us fix a maximal geodesic lamination $\l$. The complement of $\tilde \l$ in $(\tilde S,\tilde m)$ is a disjoint union of ideal triangles. The way these ideal triangles are glued together is encoded by a transverse H\"older distribution $\s_m\in\Hold(\l;\R)$, which depends only on the isotopy class of $m$. The correspondence $m\mapsto \s_m$ induces a real analytic homeomorphism $$\Sigma:\teich(S)\longrightarrow \mc C(\l),$$
whose image $\mc C(\l)$ is an open convex cone in $\Hold(\l;\R)$ bounded by finitely many faces. The transverse measures for $\l$ belong to the boundary of $\mc C(\l)$.  \par

 When defining the shearing coordinates (\cite[\textsection 2]{bonahon-toulouse}), Bonahon uses \emph{transverse cocycles} instead of transverse H\"older distributions. These two concepts are equivalent (\cite[\textsection 6]{bonahon-topology}).

\subsection*{Shearing hyperbolic metrics}
 Given $\a\in\Hold(\l;\R)$, the most natural deformation of $\s_m$with tangent direction $\a$ is $t\mapsto \s_m+t\a$. When $\a$ is a transverse measure for $\l$, this deformation remains in $\mc C(\l)$ for all $t\geq0$, and its pull-back $t\mapsto \Sigma^{-1}(\s_m+t\a)$ to $\teich(S)$ coincide with the trajectory of the earthquake flow passing through $[m]$ and pointing in the direction of $\a$. Now, we assume $\a$ sufficiently small, and we explain how to construct a hyperbolic metric $m^\a$ such that 
\begin{eqnarray*}
[m^\a] & = &  \Sigma^{-1}(\s_m+\a).
\end{eqnarray*}\par

 First, we deform the action by Deck transformations of $\pi_1(S)$ on $\tilde S$ into a discrete and faithful action $\rho^\a:\pi_1(S)\rightarrow \Isom(\tilde S)$. The quotient $(S',m')=(\tilde S,\tilde m)/\rho^\a$ is a hyperbolic surface. Then, we define $m^\a$ as the pull-back of $m'$ by any diffeomorphism $f:S\rightarrow S'$ whose induced homomorphism on fundamental groups is $\rho^\a$. The isotopy class $[m^\a]\in\teich(S)$ does not depend on $f$. We take few lines to recall the construction of $\rho^\a$.\par

 Given two components $P$ and $Q$ of $\tilde S-\tilde \l$, we denote by $\Pl_{PQ}$ the set of components of $\tilde S-\tilde\l$ that separate $P$ from $Q$. For any $R\in \Pl_{PQ}$, we call $g_R^P$ (resp. $g_R^Q$) the boundary geodesic of $R$ in the direction of $P$ (resp. of $Q$). We also set $\a(P,R)=\a(\mathbf{1}_k)$ where $k$ is any geodesic arc in $(\tilde S,\tilde m)$ with endpoints in $P$ and $R$. \par

To each finite subset $\Pl\subset \Pl_{PQ}$, we associate the isometry $\f^\a_\Pl\in\Isom(\tilde S)$ defined by:
\begin{eqnarray*}
\f^\a_{\Pl} & = & T^{\a(P,P_1)}_{g_1^P} T^{-\a(P,P_1)}_{g_1^Q}\ T^{\a(P,P_2)}_{g_2^P}T^{-\a(P,P_2)}_{g_1^Q}\ldots  T^{\a(P,P_n)}_{g_n^P}T^{-\a(P,P_n)}_{g_n^Q} T^{\a(P,Q)}_{g_Q^P},
\end{eqnarray*}
where the $\Pl_i$'s are the elements of $\Pl$ indexed as one goes from $P$ to $Q$, and $T^u_g$ is the $\tilde m$-isometry that translates by a signed length $u$ on the geodesic $g$. We assume that the geodesics that separate $P$ and $Q$ are oriented from right to left seen from $P$.  The sequence $(\f^\a_\Pl)_{\Pl}$ converges in $\Isom(\tilde S)$, we call $\f^\a_{PQ}$ its limit (\cite[\textsection 5]{bonahon-toulouse}).\par
 
 To define $\rho^\a$, we fix a component $P$ of $\tilde S-\tilde \l$, and we set 
 \begin{eqnarray*}
 \rho^\a(\g)  &= &  \f^\a_{P\g(P)}\circ \g,
 \end{eqnarray*}
for every $\g\in\pi_1(S)$. The conjugacy class of $\rho^\a$ does not depend on the fixed component $P$.

\subsection*{Uniform convergence of derivatives}
In this paragraph, we discuss the convergence of the derivatives of some geometric quantities. Here $\ell(\cdot)$ stands for the translation length.

\begin{lemma}
As $\Pl$ tends to $\Pl_{P\g(P)}$, the derivatives of $\a\mapsto \ell(\f^\a_\Pl\circ\g)$ converge towards the derivatives of $\a\mapsto \ell(\f^\a_{PQ}\circ\g)$ uniformly on some neighborhood of the origin in $\Hold(\l;\R)$.
\end{lemma}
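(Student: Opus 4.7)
The plan is to interpret $\a\mapsto \f^\a_\Pl$ as a holomorphic family in $\PSL(2,\C)$ and promote the known convergence of the isometries themselves (Bonahon \cite{bonahon-toulouse}) to convergence of all derivatives by a Cauchy integral argument.

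First, I would complexify. Each translation $T^u_g$ depends real-analytically on $u\in\R$ and extends to a holomorphic family in $\PSL(2,\C)$ for $u\in\C$. Since $\f^\a_\Pl$ is a finite product of terms $T^{\pm\a(P,P_i)}_{g_i^\bullet}$ with coefficients linear in $\a$, the map $\a\mapsto\f^\a_\Pl\circ\g$ extends to a holomorphic map on the complexification $\Hold(\l;\C)$ with values in $\PSL(2,\C)$.

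Second, I would show that the sequence $(\f^\a_\Pl)_\Pl$ converges uniformly on some complex neighborhood $U_\C\subset\Hold(\l;\C)$ of the origin. The proof of \cite[\textsection 5]{bonahon-toulouse} controls $\f^\a_{\Pl'}(\f^\a_\Pl)^{-1}-\Id$ by a sum over $\Pl'\setminus\Pl$ of matrix norms of the form $\|T^{\a(P,R)}_{g_R^P}T^{-\a(P,R)}_{g_R^Q}-\Id\|$, which are bounded by the H\"older norm of $\a$ multiplied by an exponentially decaying factor depending on the position of $R$ between $P$ and $Q$. These pointwise matrix estimates are real-analytic in $\a$ and survive, up to a multiplicative constant, if $\a$ is replaced by a complex parameter of sufficiently small norm. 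Hence the telescoping series converges absolutely and uniformly on $U_\C$.

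Third, I would apply the Cauchy integral formula entry-by-entry to the matrix coefficients: uniform convergence of a sequence of holomorphic $\PSL(2,\C)$-valued maps on $U_\C$ forces uniform convergence of all complex (hence all real) partial derivatives on every compact subset of $U_\C$. Choosing a real neighborhood $U\subset\Hold(\l;\R)$ of the origin relatively compact in $U_\C$ gives uniform convergence of all derivatives of $\a\mapsto\f^\a_\Pl\circ\g$ on $U$. Finally, for $\a\in U$ small enough, $\f^\a_\Pl\circ\g$ remains close to the hyperbolic isometry $\g$, so the translation length $\ell(\psi)=2\,\arccosh(|\trace\psi|/2)$ is a real-analytic function of $\psi$ there, and the chain rule yields the claimed uniform convergence for $\a\mapsto\ell(\f^\a_\Pl\circ\g)$.

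The main obstacle is the second step: rigorously verifying that Bonahon's estimates complexify, i.e.\ that the series defining $\f^\a_{PQ}$ has a positive radius of absolute convergence in $\Hold(\l;\C)$. Once this is established, everything else reduces to standard complex-analytic and chain-rule manipulations.
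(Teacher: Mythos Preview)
Your proposal is correct and follows essentially the same route as the paper: complexify, invoke uniform convergence of the holomorphic family $(\f^\a_\Pl)_\Pl$ on a complex neighborhood of the origin (which the paper obtains by citing directly the proof of Theorem~31 in \cite{bonahon-toulouse} rather than re-deriving the estimates, so your ``main obstacle'' is already handled by Bonahon), and conclude via Cauchy's integral formula. The only cosmetic difference is that the paper works with the complexification $\Hold(\l;\R)\oplus i\,\Hold(\l;\R/2\pi\Z)$, reflecting the $2\pi i$-periodicity of $u\mapsto T^u_g$ in $\PSL(2,\C)$, but near the origin this coincides with your $\Hold(\l;\C)$.
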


 \begin{consequence} To compute the derivatives of $\a\mapsto \ell_\g(m^\a)$, it suffices to compute the derivatives of $\a\mapsto \ell(\f^\a_{\Pl_n}\circ\g)$, and determine their limits.
\end{consequence}

\begin{proof}[Sketch of proof]  We take a complex path following Bonahon (\cite[\textsection 10]{bonahon-toulouse}). We use the complexifcation $\Hold(\l;\R)\oplus i \Hold(\l;\R/2\pi\Z)$ of $\Hold(\l;\R)$, where $\Hold(\l;\R/2\pi\Z)$ is the space of transverse H\"older distributions with values in $\R/2\pi\Z$. Without entering into details, let us mention that $\mc C(\l)\oplus \Hold(\l;\R/2\pi)$ parametrizes pleated surfaces with pleating locus $\l$  (see \cite[Theorem~31]{bonahon-toulouse} for a precise statement).\par

 We identify $\tilde S$ with $\Hyp^2$. For each $\Pl$, the map
$$ \begin{array}{lrll}
\f_\Pl :& \Hold(\l;\R) & \longrightarrow & \Isom^+(\Hyp^2) \\
 & \a & \longmapsto & \f_\Pl^\a  
\end{array},$$ 
extends to a holomorphic map 
$$ \begin{array}{lrll}
\f_\Pl :& \Hold(\l;\R)\oplus i \Hold(\l;\R/2\pi\Z) & \longrightarrow &  \Isom^+(\Hyp^3)\simeq\PSL(2;\C)  \\
 & \a+i\b & \longmapsto & \f_\Pl^{\a+i\b}  
\end{array}.$$ 
Bonahon showed (see the proof of \cite[Theorem31]{bonahon-toulouse}) that the sequence $(\a+i\b\mapsto\f_\Pl^{\a+i\b})_\Pl$ converges uniformly on a small neighborhood of the origin, and deduce that ${\a+i\b}\mapsto \f^{\a+i\b}_{PQ}$ is holomorphic on this neighborhood. We use Cauchy's formula to conclude.
\end{proof}

 We identify $\tilde S$ with $\Hyp^2$. The same line of arguments gives:

\begin{lemma}\label{lem:endpoint}
Let $z\in \partial \Hyp^2$ be an endpoint of some leaf $l\in\tilde \l$. We denote by $\Pl_l$ the set of components of $\Hyp-\tilde \l$ that separate $P$ from $l$. As $\Pl\subset\Pl_l$ tends to $\Pl_l$, the derivatives of $\a\mapsto\f^\a_{\Pl}(z) $ converge uniformly on a small neighborhood of the origin in $\Hold(\l;\R)$ .
\end{lemma}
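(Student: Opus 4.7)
The plan is to mirror verbatim the complex-analytic argument sketched for the preceding lemma. I would first complexify $\Hold(\l;\R)$ into $\Hold(\l;\R)\oplus i\Hold(\l;\R/2\pi\Z)$ and extend each $\f_\Pl$ to a holomorphic map
\[
\f_\Pl : \Hold(\l;\R)\oplus i\Hold(\l;\R/2\pi\Z) \longrightarrow \Isom^+(\Hyp^3)\cong \PSL(2;\C),
\]
defined by the same product formula, where now each $T^{u}_{g}$ is the hyperbolic isometry of $\Hyp^3$ of complex translation length $u\in\C/2\pi i\Z$ along the axis $g$. Bonahon's estimates in the proof of \cite[Theorem~31]{bonahon-toulouse} apply without change to subcollections $\Pl\subset\Pl_l$: the shears $\a(P,P_i)+i\b(P,P_i)$ decay because of the H\"older regularity of $\a+i\b$, and the telescoping products still telescope. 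Hence $(\f_\Pl^{\a+i\b})_{\Pl\subset\Pl_l}$ converges uniformly in $\PSL(2;\C)$ on some complex neighborhood $U$ of the origin.

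Next, I would compose with evaluation at $z$. Identifying $\partial\Hyp^3$ with $\proj^1(\C)$ via a M\"obius chart chosen so that $z$ is finite, the evaluation $\PSL(2;\C)\to\proj^1(\C)$, $\f\mapsto \f(z)$, is holomorphic. Composition then makes $\a+i\b\mapsto\f_\Pl^{\a+i\b}(z)$ a $\proj^1(\C)$-valued holomorphic function on $U$, and uniform convergence of the isometries in $\PSL(2;\C)$ passes, by continuity of the action of $\PSL(2;\C)$ on $\Sph^2=\partial\Hyp^3$, to uniform convergence of these functions on $U$ (possibly after shrinking $U$ to ensure the limit stays in a single affine chart around $z$).

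Finally, I would invoke Cauchy's formula exactly as in the previous lemma: uniform convergence of holomorphic functions on a complex neighborhood implies uniform convergence of all their complex derivatives on any slightly smaller neighborhood. Restricting to the real directions $\a\in\Hold(\l;\R)\subset\Hold(\l;\R)\oplus i\Hold(\l;\R/2\pi\Z)$ yields the stated uniform convergence of the derivatives of $\a\mapsto\f_\Pl^\a(z)$ on a small real neighborhood of the origin.

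The single point that is not formal is the uniform convergence of the complexified $\f_\Pl^{\a+i\b}$ when the plaques of $\Pl_l$ accumulate on the leaf $l$ that carries $z$; this is the main potential obstacle. However, Bonahon's proof of \cite[Theorem~31]{bonahon-toulouse} is designed precisely to handle accumulation of plaques on a limiting geodesic, and nothing in his estimates requires the limit to be another plaque rather than a leaf. Once this is granted, the boundary-evaluation and Cauchy steps are routine, and the argument of the preceding lemma transposes without modification.
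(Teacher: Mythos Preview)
Your argument matches the paper for the case where $l$ is an isolated leaf: then $l$ bounds a plaque $R$ with $\Pl_l=\Pl_{PR}$, Bonahon's estimates give uniform convergence of the holomorphic maps $\a+i\b\mapsto\f_\Pl^{\a+i\b}$ in $\PSL(2;\C)$, and composing with evaluation at $z$ plus Cauchy's formula finishes as you say.

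The gap is in the non-isolated case, which you correctly flag as ``the main potential obstacle'' but then wave away. Two points. First, the shears $\a(P,P_i)$ do \emph{not} decay; Bonahon's Lemma~6 only gives $|\a(P,P_i)|\leq C\|\a\|\,i$, and indeed in the paper's spiralling example they are bounded away from zero. What makes the product $\f_\Pl^\a$ converge in the $\Pl_{PQ}$ setting is that each telescoping pair $T^{\a(P,P_i)}_{g_i^P}T^{-\a(P,P_i)}_{g_i^Q}$ is close to the identity \emph{and} the last factor $T^{\a(P,Q)}_{g_Q^P}$ is fixed independently of $\Pl$. When $l$ is a non-isolated leaf there is no target plaque $Q$, so there is no fixed final factor, and you have not shown that the isometries $\f_\Pl^{\a+i\b}$ themselves converge in $\PSL(2;\C)$; in general they need not. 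What is true is only that their \emph{evaluations at $z$} converge, because the trailing translations are along geodesics accumulating on $l$ and hence nearly fix $z$; but this is a statement about points, not isometries, and needs its own argument.

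The paper supplies exactly that argument, and it is different from yours: it approximates $z$ by a monotone sequence $(z_n)$ of endpoints of \emph{isolated} leaves (for which your argument does apply), uses Dini's theorem on the monotone real sequence $(z_n^\a)$ to get uniform convergence in $\a$, and then transfers this to the complexification via the comparison $d_{\Sph^2}(z_n^{\a+i\b},z_m^{\a+i\b})\le d_{\Sph^2}(z_n^{\a},z_m^{\a})$, yielding a uniformly Cauchy family of holomorphic functions. Only then does Cauchy's formula apply. So the missing idea in your proposal is precisely this isolated-leaf approximation coupled with the Dini/comparison step.
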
 
 
\begin{proof}[Sketch of proof] 
If $l$ is an isolated leaf of $\tilde \l$, then it bounds a component $R$ of $\Hyp-\tilde \l$ such that $\Pl_l=\Pl_{PR}$. In that case, the lemma is obvious as $(\a+i\b\mapsto \f^{\a+i\b}_\Pl)_{\Pl\subset\Pl_{l}}$ converges uniformly to $\a+i\b\mapsto \f_{PR}^{\a+i\b}$.\par
 
 If $l$ is not an isolated leaf, then we consider a sequence of endpoints $(z_n)_n$ of isolated leaves that converges monotonically to $z$. We denote by $z_n^{\a+i\b}$ the limit of $( \f^{\a+i\b}_\Pl(z_n))_{\Pl\subset\Pl_{l_n}}$, where $l_n$ is the isolated leaf with endpoint $z_n$. By monotonicity, the sequence $(z_n^\a)_n$ converges for $\a\in \Hold(\l;\R)$ small enough. Moreover, the convergence is uniform on some neighborhood of the origin in $\Hold(\l;\R)$, thanks to Dini's theorem. We endow $\partial \Hyp^3\simeq \Sph^2$ with the round metric compatible with the elliptic elements of $\Isom(\Hyp^3)$. From the definition of $\f_\Pl^{\a+i\b}$, it comes that $d_{\Sph^2}(z^{\a+i\b}_n,z_m^{\a+i\b})\leq d_{\Sph^2}(z^{\a}_n,z_m^{\a})$. We deduce that the sequence $(\a+i\b\mapsto z_n^{\a+i\b})_n$ is uniformly Cauchy on some neighborhood of the origin in $\Hold(\l;\R)\oplus i \Hold(\l;\R/2\pi\Z)$. This implies immediately the uniform convergence of the derivatives, for each $z_n$ is holomorphic. It is then not difficult to conclude.
 \end{proof} 
 
  The interest of this lemma lies in the fact that many geometric quantities can be expressed in terms of points on the boundary. Let $p,q,r,s$ be four points on $\partial \Hyp^2$. If the geodesics $(qr)$ and $(ps)$ intersect with an angle $\theta$, then the cross-ratio $[p,q,r,s]$ is equal to $\cos^2(\theta/2)$. If they do not intersect, then $[p,q,r,s]=-\sinh^2(h/2)$, where $h$ is the distance between the geodesics.

 \subsection*{An example}
The shearing coordinates measure how the components of $S-\l$ are glued together.  We did not define the map $\Sigma$ that associates the transverse H\"older distribution $\s_m$ to the hyperbolic metric $m$, but we explained how to pass from the hyperbolic metric $m$ to the hyperbolic metric $m^\a$ with shearing coordinates $\s_m+\a$. This is done by shifting the components of $\tilde S-\tilde \l$ with respect to each other. In this paragraph, we illustrate this construction by considering the case of a finite maximal geodesic lamination.\par

\begin{figure}[h]
\labellist
\small \hair 2pt
\pinlabel $P_0$ [tl] at 320 80
\pinlabel $P_1$ [tl] at 360 80
\pinlabel $Q_0$ [tl] at 545 80
\pinlabel $Q_1$ [tl] at 495 80
\pinlabel $g_0$ [tl] at 295 150
\pinlabel $g_1$ [tl] at 332 150
\pinlabel $g_2$ [tl] at 372 150
\pinlabel $h_0$ [tl] at 562 150
\pinlabel $h_1$ [tl] at 530 150
\pinlabel $h_2$ [tl] at 487 150
\endlabellist
\centering
\includegraphics[height=4cm]{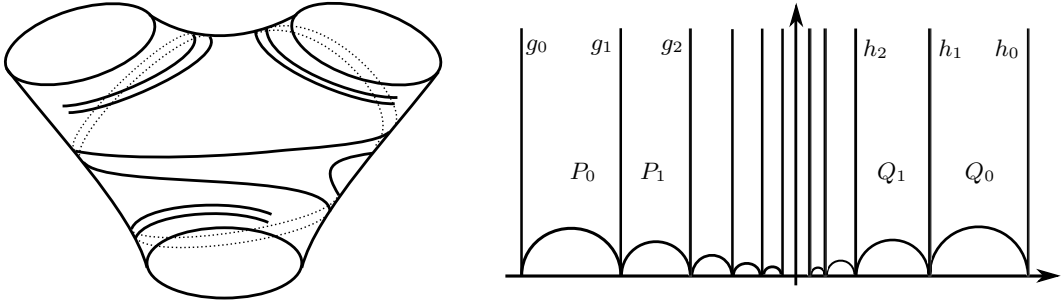}
\caption{A finite maximal lamination}\label{fig:lamination}
\end{figure}

Let $\l$ be a finite maximal geodesic lamination, obtained by adding spiralling geodesics to some pants decomposition (see for instance the picture on the left of Figure~\ref{fig:lamination}). We assume that the geodesics spiral in the same direction along each closed leaf of $\l$. We identify $(\tilde S,\tilde m)$ with the upper-half plane, in such a way that $\tilde \l$ contains a bi-infinite sequence of vertical geodesics converging to the imaginary axis (as on the right of Figure~\ref{fig:lamination}). The imaginary axis is the lift of some closed leaf $\g$ of $\l$, and the other vertical geodesics are lifts of the geodesics spiralling along $\g$. We denote by $g_0,g_1,\ldots$ the vertical geodesics on the left of the imaginary axis, indexed from left to right. We denote by $P_i$ the component of $\tilde S-\tilde\l$ bounded by $g_i$ and $g_{i+1}$. We define similarly $h_i$ and $Q_i$.\par

Given a transverse H\"older distribution $\a$ for $\l$, we observe that
\begin{eqnarray*}
\f^\a_{P_0P_n} &= & T^{\a(P_0,P_1)}_{g_1} T^{\a(P_1,P_2)}_{g_2}\cdots T^{\a(P_{n-1},P_n)}_{g_n}.
\end{eqnarray*} 
Adjacent components are shifted with respect to each other, and the amount of shifting is given by the $\a$-measure of a small geodesic arc intersecting their common boundary. When the $\a$-measure of this arc is positive, the components are shifted to the left with respect to each other. Then, we recognize the classical construction of an earthquake path.\par

 One would expect the sequence of isometries $(\psi^\a_n)_n$ defined by
\begin{eqnarray*}
\psi^\a_{n} & = & T^{\a(P_0,P_1)}_{g_1}\cdots T^{\a(P_{n-1},P_n)}_{g_n} T^{\a(Q_n,Q_{n-1})}_{h_n}\cdots T^{\a(Q_1,Q_0)}_{h_1},
\end{eqnarray*}
to converge towards $\f^\a_{P_0Q_0}$. However, we have $T_{g_i}^{\a(P_{i-1},P_i)}(z)=e^{\a(P_{i-1},P_i)} z+ \ast$, and
\begin{eqnarray*}
\psi_n^\a(z) & = & e^{\a(P_0,P_n)+\a(Q_0,Q_n)}z+\ast,
\end{eqnarray*}
 where $\ast$ replaces any constant term. So, the convergence of $\a(P_0,P_n)+\a(Q_0,Q_n)$ as $n$ tends to infinity is a necessary condition for the convergence of $(\psi^\a_n)_n$. This is automatically satisfied when $\a$ is a transverse measure, but not in general. On the contrary we have
\begin{eqnarray*}
\f_{\Pl}^\a(z) & = & e^{\a(P_0,Q_0)}z+\ast,
\end{eqnarray*}
for $\Pl=\{P_1,\ldots,P_n,Q_n,\ldots,Q_1\}$. This explains Bonahon's choice for the sequence $(\f^\a_\Pl)_\Pl$.\par

 We conclude this paragraph with a remark on the length function of the closed leaf $\g$. The spiralling geodesics divide a pair of pants into two ideal triangles. It implies that $P_2=\g(P_0)$, where $\g\in \pi_1(S)$ translates along the imaginary axis. With some abuse of notations, we use the letter $\g$ for an element of $\pi_1(S)$ whose axis project on the geodesic $\g$. The axis of $\rho^\a(\g)$ is still a vertical geodesic, and its translation length $\ell_\g(m^\a)$ is equal to the logarithm of the ratio between the Euclidean widths of $P_0$ and $\f^\a_{P_0P_2}(P_2)$. So we find that
 \begin{eqnarray*}
 \ell_\g(m^\a) & = & \ell_\g(m)+\a(P_0,P_2).
 \end{eqnarray*}
We also have $ \ell_\g(m^\a)  =  \ell_\g(m)+\a(Q_0,Q_2)$. The quantity $\a(P_0,P_2)$
is the $\a$-mass of some geodesic loop homotopic to $\g$ and transverse to $\l$. The formula above is a particular case of the theorem~E of \cite{bonahon-toulouse}.

\section{Local approximation}\label{sec:convergence}

Once for all, we fix a hyperbolic metric $m$ on $S$, a maximal geodesic lamination $\l$ of $S$, and a transverse H\"older distribution $\a\in\Hold(\l;\R)$.
We consider a geodesic arc $k$ of $(\tilde S,\tilde m)$ whose endpoints belong to two components $P$ and $Q$ of $\tilde S-\tilde \l$. 
We denote by $K\subset G(\tilde S)$ the compact subset containing the leaves of $\tilde \l$ that separate $P$ from $Q$.\par

Let $(\Pl_n)_n$ be an increasing sequence of finite subsets that converges to $\Pl_{PQ}$, we assume for simplicity that $\Pl_n$ is of cardinal $n$.
The isometry $\f_{\Pl_n}^\a$ is encoded by the following H\"older distribution on $G(\tilde S)$:
\begin{eqnarray*}
\a_n & = & \a(P,Q)\ \d_{g^P_Q} + \sum_{i=1}^{n} \a(P_1,P_i) \ (\delta_{g_i^P}-\d_{g_i^Q}).
\end{eqnarray*}
We underline that $\a_n$ has finite support, and is not $\pi_1(S)$-invariant.\par

 We call $\a_K$ the restriction of the geodesic H\"older current $\a$ to $H(K)$. The first lemma below says that $(\a_n)_n$ is a good approximation of $\a_K$. In particular, $\a$ can be approximated \emph{locally} by linear combinations of Dirac measures. This is certainly not possible in $\Hold(\l;\R)$ in general. The second lemma gives an application of the previous result to functions defined by an integral depending on a parameter. \par

\subsection*{Convergence of the sequence}
This lemma is due to Bonahon (\cite[Lemma~9]{bonahon-topology}), but we include a proof as it plays a fundamental role in this article, and as Bonahon's formulation is slightly different.

\begin{lemma}\label{lem:convergence}
The sequence $(\a_n)_n$ converges to $\a_K$ in the topological dual of any $H_\nu(K)$ with $1\geq \nu>0$.
\end{lemma}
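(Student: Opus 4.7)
The plan is to show that for each $f \in H_\nu(K)$, $\a_n(f) \to \a_K(f)$. To set up, I identify the transverse geodesic arc $k$ (from $P$ to $Q$) with $K$ via the bijection $k \cap \tilde \l \leftrightarrow K$ that sends a crossing point $p$ to the leaf $l_p$ through it. Under this identification, $\a_K$ corresponds to the H\"older distribution $\a|_k$ induced on $k$, and any $f \in H_\nu(K)$ pulls back to a H\"older function $\bar f$ on $k$ so that $\a_K(f) = \a|_k(\bar f)$. The crossings of $g_i^P$ and $g_i^Q$ with $k$ bound the sub-arc on which $k$ traverses the component $P_i$, and $\bar f$ takes the values $f(g_i^P)$ and $f(g_i^Q)$ at these two points.

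The key algebraic step is an Abel summation applied to the defining formula for $\a_n$. Setting $A_i = \a(P, P_i)$ for $i = 1, \ldots, n$ (with $A_0 = 0$ and $A_{n+1} = \a(P, Q)$), and using the cocycle identity $A_i - A_{i-1} = \a(P_{i-1}, P_i) =: a_i$, I would rearrange $\a_n(f)$ as
\begin{equation*}
\a_n(f) \;=\; \sum_{i=1}^{n+1} a_i\, \bar f(g_i^P) \;-\; \sum_{i=1}^{n} A_i \bigl(\bar f(g_i^Q) - \bar f(g_{i+1}^P)\bigr),
\end{equation*}
with the convention $g_{n+1}^P = g_Q^P$. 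The first sum is a Riemann sum for $\a|_k(\bar f)$, sampling $\bar f$ at the ``leftmost'' boundary geodesic of each piece of the partition; by standard estimates for Riemann sums of H\"older functions against H\"older distributions, it tends to $\a_K(f)$ as $\Pl_n$ refines towards $\Pl_{PQ}$.

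The hard part is showing that the remainder $E_n(f) = \sum_{i=1}^{n} A_i \bigl(\bar f(g_i^Q) - \bar f(g_{i+1}^P)\bigr)$ tends to zero. H\"older continuity of $f$ bounds each factor by $\|f\|_\nu\, d(g_i^Q, g_{i+1}^P)^\nu$ in the H\"older metric on $G(\tilde S)$, so the question reduces to controlling $\sum_i |A_i|\, d(g_i^Q, g_{i+1}^P)^\nu$. As $\Pl_n$ exhausts $\Pl_{PQ}$, every component of $\tilde S - \tilde\l$ lying in the gap between $g_i^Q$ and $g_{i+1}^P$ is eventually absorbed into $\Pl_n$, so the individual gaps shrink to zero. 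The decisive input is the H\"older regularity of $\a$ itself, which simultaneously yields a uniform bound $|A_i| \leq M$ (from continuity of $\a$ on sub-arcs of $k$) and summability of the gap contributions (since the total H\"older mass along $k$ is finite and the geometry of $G(\tilde S)$ controls gap sizes in terms of it). Combining this estimate with the Riemann-sum convergence of the principal term gives $\a_n(f) \to \a_K(f)$, completing the proof.
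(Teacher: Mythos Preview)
Your Abel summation is algebraically correct, but both convergence claims that follow are unjustified, and the first rests on a false assertion.

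The uniform bound $|A_i|\le M$ fails in general. You justify it by ``continuity of $\a$ on sub-arcs of $k$'', but $A_i=\a(\mathbf 1_{k_i})$ is the value of $\a$ on a characteristic function, which is not H\"older, so continuity of $\a$ on $H_\nu(k)$ says nothing here. Concretely, in the finite spiralling example of \S\ref{sec:distributions} the components accumulating on the closed leaf satisfy $P_{i+2}=\g(P_i)$, hence $\a(P_0,P_{2n})=n\,\a(P_0,P_2)$ by $\pi_1(S)$-invariance, and this diverges whenever $\a(P_0,P_2)\neq 0$. Likewise, the claim that $\sum_i a_i\,\bar f(g_i^P)\to \a_K(f)$ ``by standard estimates for Riemann sums'' is circular: step-function approximants are not H\"older, so continuity of $\a$ does not apply, and establishing this convergence is essentially the content of the lemma itself.

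What is missing is the geometric input. The paper reorders the $P_i$ by \emph{decreasing width} $\ell(k\cap P_i)$ rather than by position along $k$, and then invokes two quantitative facts about geodesic laminations: the widths decay exponentially, $\ell(k\cap P_i)\le Be^{-Ai}$ \cite[Lemma~5]{bonahon-toulouse}, while the cocycle values grow at most linearly, $|\a(P,P_i)|\le C\|\a\|\,i$ \cite[Lemma~6]{bonahon-toulouse}. Bonahon's formula \cite[Theorem~2]{bonahon-toulouse} then expresses $\a_K(f)$ as an absolutely convergent series whose $n$-th partial sum is exactly $\a_n(f)$, so $(\a_K-\a_n)(f)$ is the tail, bounded by $C\|\a\|\,B^\nu\|f\|_\nu\sum_{i>n} i\,e^{-\nu Ai}\to 0$. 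Without the exponential decay of complementary widths, neither your principal term nor your remainder $E_n(f)$ can be controlled.
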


\begin{remark}
The proof uses the geometry of the geodesic lamination $\l$ on $(S,m)$.
\end{remark}

\begin{proof}[Proof following Bonahon's ideas]
We consider $\a$ and $\a_n$ as H\"older distributions on $k$ whose supports are included in $k\cap \tilde\l$. It is equivalent to work in $H(K)$ or in $H(k)$ (this is justified by \cite[Lemma~2]{bonahon-topology}, and the proof of the Proposition~5 in \cite{bonahon-topology}).\par
 
We index the elements $P_1,P_2,\ldots,P_n,\ldots$ of $\Pl_{PQ}$ in such a way that the sequence of $\tilde m$-lengths $(\ell(k\cap P_n))_{n}$ is non-increasing. Then, we set $\Pl_n=\{P_1,\ldots,P_n\}$, and we prove the lemma for this particular sequence $(\Pl_n)_n$. This clearly suffices to establish the lemma for any sequence $(\Pl_n)_n$ of finite subsets of $\Pl_{PQ}$ that is increasing and tends to $\Pl_{PQ}$.\par
  
 For any $f\in H_\nu(k)$ we have (\cite[Theorem~2]{bonahon-toulouse}):
\begin{eqnarray*}
 |(\a-\a_n)(f)| & \leq & \sum_{i\geq n} \a(P_1,P_i)\ |f(x_{P_i}^P)-f(x_{P_i}^Q)| ,
\end{eqnarray*}
where $x^P_{P_i}$ and $x_{P_i}^Q$ are the endpoints of $k\cap P_i$. Using the following estimates
$$\left\{\begin{array}{llll}
 d_{\tilde m}(x_{P_i}^-,x_{P_i}^+) & \leq & B e^{-A i} & \textnormal{\cite[Lemma~5]{bonahon-toulouse}} \\
 \a(P_1,P_i)                 & \leq  & C \|\a\| i   & \textnormal{\cite[Lemma~6]{bonahon-toulouse}}
 \end{array}\right.,$$
we find 
\begin{eqnarray*}
  |(\a-\a_n)(f)| & \leq & C\|\a\|\ B^\nu  \|f\|_\nu \left( \sum_{i\geq n} i\ e^{-\nu Ai} \right),
\end{eqnarray*}
where $A,B, C>0$ are independent of $f$, and $\|\cdot\|$ is a norm on $\Hold(\l;\R)$. Clearly, the norm of $\a-\a_N$ in the topological dual of $H_\nu(k)$ tends to zero as $n$ tends to infinity.
\end{proof}

\subsection*{Application to functions defined through transverse H\"older distributions}
Let us recall that we have fixed a point in $\tilde S_\infty$, and that the $\tilde m$-angle at this point induces a Riemannian metric on $G(\tilde S)$. Actually $G(\tilde S)$ equipped with this metric is the interior of a flat M\oe bius band with geodesic boundary.\par

 We denote by $g$ the geodesic of $(\tilde S,\tilde m)$ that supports $k$. The endpoints of $g$ divide $\tilde S_\infty$ into two open intervals, and their product is in bijection with the open subset $U\subset G(\tilde S)$ that contains all geodesics intersecting $g$ transversely. It is easy to find a product of two compact subintervals whose image in $G(\tilde S)$ contains $K$ in its interior. This gives a compact and convex subset $C$ that contains $K$ in its interior.\par

 We consider a smooth function $f:V\times U \rightarrow\R$, where $V\subset \R^m$ is an open subset. We define the functions $F_n:V\rightarrow\R$ and $F:V\rightarrow \R$ by:
$$\left\{\begin{array}{lll}
F_n(p) & = & \int_K f(p,q)\ \mrm d \a_n(q) \\
F(p) & = & \int_K f(p,q)\ \mrm d \a(q)
\end{array}\right.\textnormal{for any }p\in V.$$ 
Note that any function $f(p,\cdot)$ is Lipschitz on $K$ as $f$ is smooth, so that the expressions above make sense. We observe easily that $F_n$ is smooth with
\begin{eqnarray*}
(\mrm d^k F_n)_p(u) & = & \int_K \partial_p^k f(p,\cdot)(u) \ \mrm d \a_n
\end{eqnarray*}
for any $p\in V$ and $u\in (\R^m)^k$.

\begin{lemma}\label{lem:functions}
The function $F$ is smooth, and $(F_n)_n$ converges to $F$ in $C^\infty(V;\R)$, equipped with the topology of uniform convergence of all derivatives on compact subsets. Moreover
\begin{eqnarray*}
(\mrm d^k F)_p(u) & = & \int_K \partial_p^k f(p,\cdot)(u) \ \mrm d \a
\end{eqnarray*}
for any $p\in V$ and $u\in (\R^m)^k$.
\end{lemma}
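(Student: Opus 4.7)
The plan is to leverage Lemma~\ref{lem:convergence} by showing that the family of functions $\{\partial^k_p f(p,\cdot) : p \in V'\}$ has uniformly bounded Hölder norm on any compact $V' \Subset V$, so that functional convergence $\a_n \to \a_K$ transfers to uniform convergence $d^k F_n \to d^k F$ on $V'$.

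First I would establish that each $F_n$ is smooth and write its derivatives explicitly. Since $\a_n$ is a finite combination of Dirac masses supported at leaves of $\tilde\l$ inside $K$, we have $F_n(p) = \sum_i c_i^{(n)} f(p, g_i^{(n)})$, which is smooth in $p$ by smoothness of $f$. Differentiating in $p$ term by term gives $(\mrm d^k F_n)_p(u) = \int_K \partial_p^k f(p,\cdot)(u) \,\mrm d\a_n$, as stated.

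Next, fix $V' \Subset V$, an integer $k$, and consider the function $(p,q,u) \mapsto \partial^k_p f(p,q)(u)$ with $p \in \overline{V'}$, $q \in C$, and $u$ in the unit ball of $(\R^m)^k$. Because $\overline{V'} \times C$ is compact and $f$ is smooth on $V \times U \supset V \times C$, this function is bounded in $C^1$, so there exists a constant $M_k$ with
\begin{equation*}
\sup_{p \in \overline{V'},\,\|u\|\leq 1}\ \|\partial^k_p f(p,\cdot)(u)\|_{H_1(K)} \leq M_k.
\end{equation*}
(If required, multiply by a smooth cutoff equal to $1$ on $K$ and supported in $C$ to obtain honest elements of $H_1(G(\tilde S); C)$.) Combining this uniform bound with Lemma~\ref{lem:convergence} applied to $\nu = 1$ yields
\begin{equation*}
\sup_{p \in \overline{V'},\,\|u\|\leq 1} \left| (\mrm d^k F_n)_p(u) - \int_K \partial^k_p f(p,\cdot)(u) \,\mrm d \a \right| \leq M_k\,\|\a_n - \a_K\|_{H_1(K)^*} \xrightarrow[n\to\infty]{} 0.
\end{equation*}

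Hence for every $k \geq 0$, the sequence $(\mrm d^k F_n)_n$ converges uniformly on $\overline{V'}$ to the function $G_k$ defined by $G_k(p)(u) = \int_K \partial^k_p f(p,\cdot)(u)\,\mrm d \a$. By the classical theorem on uniform limits of derivatives (applied inductively in $k$), the limit $F$ is of class $C^\infty$ with $\mrm d^k F = G_k$, and the convergence $F_n \to F$ holds in the topology of uniform convergence of all derivatives on compacts. The only delicate point is the uniform Hölder bound in the displayed inequality above; this is routine from smoothness of $f$ and compactness of $\overline{V'} \times C$, but it is where the hypothesis that $K$ lies in the interior of a compact set $C$ inside $U$ (on which $f$ is already defined) is essential.
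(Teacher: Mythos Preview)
Your proof is correct and follows essentially the same approach as the paper: bound $\|\partial_p^k f(p,\cdot)(u)\|_1$ uniformly over a compact in $V$ using smoothness of $f$ on $\overline{V'}\times C$, then invoke Lemma~\ref{lem:convergence} to get $\|\a_n-\a_K\|_{H_1(K)^*}\to 0$ and hence uniform convergence of all derivatives. The paper's proof is terser (it does not spell out the cutoff issue or the classical theorem on uniform limits of derivatives), but the argument is the same.
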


\begin{proof}
For any $p\in V$, and any $u\in (\R^m)^k$ of unit norm, we have:
\begin{eqnarray*}
|(\mrm d^kF_n)_p(u)-\a_K(\partial^k_p f(p,\cdot)(u))|  & \leq &  \|\a_K- \a_n\|_1  \  \|\partial^k_p f(p,\cdot)(u)\|_1.
\end{eqnarray*}
But, when $p$ belongs to a compact subset $A\subset V$, we have the uniform bound:
\begin{eqnarray*}
\|\partial^k f(p,\cdot)(u )\|_1 & \leq & \sup_{A\times C} \| \partial^k_p f  \|  + \sup_{A\times C} \| \partial_q \partial^k_p f \|,
\end{eqnarray*}
where we use the generic symbol $\|\cdot \|$ for the norms induced by the Riemannian norm on the spaces of multilinear forms on some tangent space. We conclude that $(\mrm d^kF_n)_n$ converges uniformly by applying the previous lemma.
\end{proof}

\section{Computations of the derivatives of length functions}\label{sec:proof}

 We fix a closed geodesic $\g$ that is transverse to $\l$. We are going to compute the first three derivatives of $\ell_\g$ in the shearing coordinates.\par
 
 We consider a geodesic arc $k$ which is a lift of the geodesic $\g$. Following the notations of the previous section, we assume that the endpoints of $k$ belong to two components $P$ and $Q$ of $\tilde S-\tilde \l$, and we denote by $g$ the geodesic of $(\tilde S,\tilde m)$ supporting $k$.\par
 
 In all the formulas below, geometric quantities as $\ell_\g$ or $\cos\theta_p$ have to be evaluated with respect to the fixed metric $m$. 
 
\subsection*{Auxiliary derivatives}
 We recall that $U$ is the open subset of $G(\tilde S)$ that consists in the geodesics intersecting $g$ transversely. We introduce the function $\theta: \teich(S)\times U\rightarrow \R/2\pi\Z$ that associates to a point $[m']\in\teich(S)$, and a geodesic $h\in U$, the oriented $\tilde m'$-angle $\theta_h$ between $g$ and $h$. We assume that $g$ is going from $P$ to $Q$, and that $h$ points to the left when crossed by $g$. The function $\theta$ is clearly smooth. We do not specify the metric in the notation $\theta_h$, because in all the formulas below the angle is measured with respect to $\tilde m$.\par

\begin{lemma}\label{lem:derivatives} Let $\a\in \Hold(\l;\R)$. For any intersection point $p\in \g\cap \l$, we have
\begin{eqnarray*}
(\mrm d \cos\theta_p)_{[m]}(\a) & = & \frac{\sin\theta_p}{2\sinh\frac{\ell_\g}{2}} \ \int_\g \cosh\left(\frac{\ell_\g}{2}-\ell_{pq}\right) \sin \theta_q \ \mrm d \a(q), \\
(\mrm d \sin\theta_p)_{[m]}(\a) & = & -\frac{\cos\theta_p}{2\sinh\frac{\ell_\g}{2}} \ \int_\g \cosh\left(\frac{\ell_\g}{2}-\ell_{pq}\right) \sin \theta_q \ \mrm d \a(q),
\end{eqnarray*}
where $\ell_\g$ is the $m$-length of $\g$. For any pair of intersection points $\{p,q\}\subset \g\cap \l$, we have 
\begin{eqnarray*}
(\mrm d\ell_{pq})_{[m]}(\a) & = &\int_p^q \cos\theta_r \mrm d\a(r) \\
 & & +\frac{1}{2\sinh\frac{\ell_\g}{2}} \int_\g \sinh\left(\frac{\ell_\g}{2}-\ell_{rq}\right) \sin\theta_r\cot\theta_q- \sinh\left(\frac{\ell_\g}{2}-\ell_{rp}\right) \sin\theta_r\cot\theta_p \mrm d\a(r),
\end{eqnarray*}
where, given an orientation for $\g$, we denote by $\ell_{pq}$ the $m$-length of the segment of $\g$ that goes from $p$ to $q$.
\end{lemma}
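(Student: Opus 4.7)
The plan is to mimic the strategy of the main theorem: approximate $\alpha$ locally by a finitely-supported sequence $\alpha_n$, handle each Dirac mass by an explicit single-shear computation, then pass to the limit using Lemma~\ref{lem:functions}.

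First I would lift $\gamma$ to a geodesic arc $k$ as in the setup of \textsection\ref{sec:convergence}, and use the approximation
\[
\alpha_n \;=\; \alpha(P,Q)\,\delta_{g_Q^P} \;+\; \sum_{i=1}^{n} \alpha(P_1,P_i)\bigl(\delta_{g_i^P}-\delta_{g_i^Q}\bigr)
\]
of $\alpha_K$ provided by Lemma~\ref{lem:convergence}. For such an $\alpha_n$ the isometry $\varphi^{\alpha_n}_{\mathcal{P}_n}$ is an explicit finite product of translations along individual leaves, so the derivatives of $\cos\theta_p$, $\sin\theta_p$ and $\ell_{pq}$ in the direction of $\alpha_n$ decompose, by the chain rule, into finite sums of single-shear contributions, one per Dirac mass.

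Each such contribution I would compute by expressing the relevant quantity as a cross-ratio of four boundary points (using the dictionary $[p,q,r,s]=\cos^2(\theta/2)$ or $=-\sinh^2(h/2)$ recalled at the end of \textsection\ref{sec:distributions}) and then differentiating via Lemma~\ref{lem:endpoint}. For instance $\cos\theta_p$ is a cross-ratio of the endpoints of the lift of $\gamma$ and of the leaf $l_p$ through $\tilde p$, while $\ell_{pq}$ is read from the cross-ratio of the endpoints of $l_p$ and $l_q$; under a shear along a single leaf $l_r$, some of these endpoints are moved by a one-parameter group of $\tilde m$-isometries and the others are fixed, so a direct differentiation yields the formula. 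I expect the single-shear answer to be precisely the specialization of the integrands in the statement at $q=r$:
\[
(\mrm d\cos\theta_p)_{[m]}(\delta_{l_r}) \;=\; \frac{\sin\theta_p\,\sin\theta_r}{2\sinh(\ell_\gamma/2)}\,\cosh\!\Bigl(\frac{\ell_\gamma}{2}-\ell_{pr}\Bigr),
\]
and analogously for $\sin\theta_p$ and $\ell_{pq}$. These are essentially the computations postponed to \textsection\ref{sec:computations}.

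Once the single-shear formulas are in hand, the derivative in the direction $\alpha_n$ takes the shape $\int f(p,q)\,\mrm d\alpha_n(q)$ for a kernel $f$ that depends smoothly on $[m']\in\teich(S)$ and on $q\in U$ (the smoothness of $\cosh(\ell_\gamma/2-\ell_{pq})\sin\theta_q$ in $q$ is clear). Lemma~\ref{lem:functions} then passes the limit inside the integral and yields the stated formulas. The main obstacle will be the single-shear computation itself: one must simultaneously track the motion of several boundary points under the shear (those of $l_p$, of $l_q$, and of the axis of $\rho^{t\delta}(\gamma)$), differentiate the cross-ratios, and recognize the total contribution over all lifts in the annular cover of $(S,m)$ as the closed form $\cosh(\ell_\gamma/2-\ell_{pq})/\sinh(\ell_\gamma/2)$. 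This collapsing of a geometric sum into a compact hyperbolic-trigonometric expression is where I expect most of the work to lie.
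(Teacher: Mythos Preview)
Your overall architecture---approximate $\alpha$ by the finite combination $\alpha_n$, compute the derivative along each Dirac mass as a single-shear contribution, then pass to the limit via Lemmas~\ref{lem:convergence} and~\ref{lem:functions}---is exactly what the paper does. The difference lies in how the single-shear derivative is obtained. You propose to write $\cos\theta_p$ and $\ell_{pq}$ as cross-ratios of boundary points and differentiate those; the paper instead makes a direct geometric observation in \textsection\ref{sec:computations}: when one shears along a single leaf $h'$ with $\gamma(h)=h'$, the new axis $g(t)$ of $\gamma(t)=\varphi(t)\circ\gamma$ rotates about the \emph{fixed} midpoint $M$ of the common perpendicular to $h$ and $h'$. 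From this and an elementary Jacobi-field computation the closed form $\tfrac{1}{2}\cosh(\ell_\gamma/2-\ell_{hl})\sin\theta_h\sin\theta_l/\sinh(\ell_\gamma/2)$ drops out directly.

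This corrects your expectation about where the difficulty lies. There is no ``collapsing of a geometric sum over all lifts in the annular cover'': a single Dirac mass in $\alpha_n$ corresponds to one translation along one leaf in $\tilde S$, and the deformed isometry is just $\varphi(t)\circ\gamma$, whose axis is found by the midpoint observation above---no infinite sum ever appears. If you try to track the fixed points of $\varphi(t)\circ\gamma$ via cross-ratios you will eventually rediscover this fact, but the midpoint argument is much shorter. One point you did not flag: for the third formula, leaves $l_r$ lying \emph{between} $p$ and $q$ contribute differently from those outside (they account for the extra term $\int_p^q\cos\theta_r\,\mrm d\alpha(r)$), so the single-shear formula~\eqref{eq:distance} must be supplemented by a separate computation for that case.
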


\begin{remark}
We have $\ell_\g/2-\ell_{pq}=\ell_\g/2-(\ell_\g-\ell_{qp})=-(\ell_\g/2-\ell_{qp})$, so that the first two formulas do not depend on an orientation of $\g$.
\end{remark}

\begin{proof}
The three formulas rely on the same ideas, so we only prove the first one. We work in the universal cover, and we integrate H\"older distributions over the set $K$ defined at the beginning of \textsection\ref{sec:convergence}. This is the same as integrating over $\g$.\par

 As we work in the universal cover, we can consider the variation of a geometric quantity in the direction of a finite approximation $\a_n$. We recall that $\a_n$ is a linear combination of Dirac measures. The deformation associated to $\a_n$ is obtained by shifting along each leaf of $\tilde \l$ that belong to the support of $\a_n$, and the amount of shifting is equal to the corresponding coefficient of the linear combination. For instance, the $\a_n$-displacement of the component $Q$ is given by $t\mapsto \f^{t\a}_{\Pl_n}(Q)$.\par
 
  We denote by $f_n:[-\e,\e]\rightarrow\R$ (resp. $f:[-\e,\e]\rightarrow\R$) the variation of $\cos\theta_p$ in the direction of $\a_n$ (resp. of $\a$). Note that $f(t)$ is equal to $\cos\theta_p$ evaluated with respect to the metric $m^{t\a}$. From the Lemme~\ref{lem:endpoint}, we deduce the pointwise convergence of the derivatives of $f_n$ towards the derivatives of $f$ as $n$ tends to infinity.\par

 We have computed at the end of \textsection\ref{sec:computations} the derivative $f'_n$. It is given by the first formula above with $\a_n$ instead of $\a$, and $K$ instead of $\g$. We conclude by taking the limit of $f'_n(0)$ as $n$ tends to infinity (Lemma~\ref{lem:convergence}).\par 
 
 As regard the other formulas, let us precise that in the computation of the derivative of $\ell_{pq}$, one has to take into account the contribution of the geodesics of $K$ lying between $p$ and $q$. The formula \eqref{eq:distance} works only for the leaves that do not lie between $p$ and $q$. One establishes easily a formula for the other case, using the same ideas as for the proof of the formula~\eqref{eq:distance}.
\end{proof}

\subsection*{The first and second derivatives}
 The last formula of the above lemma implies 
\begin{eqnarray*}
(\mrm d \ell_\g)_{[m]} (\a) & = & \int_\g \cos\theta_p \ \mrm   d \a(p),
\end{eqnarray*}
for all $\a\in\Hold(\l;\R)$. Then, the first formula and the Lemma~\ref{lem:functions} give
\begin{eqnarray*}
(\mrm d^2\ell_\g)_{[m]}(\a,\b) & = & \frac{1}{2\sinh\frac{\ell_\g}{2}} \int_{\g}\int_\g  \cosh\left(\frac{\ell_\g}{2}-\ell_{pq}\right)\ \sin \theta_p \sin \theta_q   \ \mrm d\beta(q)\mrm d \a(p),
\end{eqnarray*} 
for all $\a,\b\in\Hold(\l;\R)$.

\subsection*{Higher derivatives}
The Lemmas~\ref{lem:derivatives} and \ref{lem:functions} enable the computations of all derivatives of $\ell_\g$ in a recursive way. As an example, this is the formula for the third derivative:
\begin{align*}
(\mrm d^{3}\ell_\g)_{[m]}(\a^3) & = -\frac{1}{2\left(\sinh\frac{\ell_\g}{2}\right)^2} \int_\g\int_\g \int_\g &   \cosh(\ell_{pr}-\ell_{rq}) \sin\theta_p\sin\theta_q\cos\theta_r & \\
&&  + \cosh(\ell_{qp}-\ell_{pr}) \cos\theta_p \sin\theta_q \sin\theta_r & \\
&&  + \cosh(\ell_{pq}-\ell_{qr})\sin\theta_p\cos\theta_q\sin\theta_r  & \ \mrm   d\a^3,
\end{align*}
for all $\a\in\Hold(\l;\R)$. This should be compared with Bridgeman's formula for the third derivative of $2\cosh(\ell_\g/2)$ along twist deformations (\cite{bridgeman}). 

\section{Extension to measured laminations}\label{sec:laminations}

In this section, we prove the following generalization of our main theorem:

\begin{theorem}\label{thm:laminations}
Let $\l$ be a maximal geodesic lamination of $(S,m)$, and $(\g,\mu)$ be a measured lamination  transverse to $\l$. For any transverse H\"older distribution $\a\in\Hold(\l;\R)$, we have
\begin{eqnarray*}
(\mrm d \ell_{(\g,\mu)})_{[m]}(\a)  & = & \int \int \cos \theta_p\  \mrm d\mu\ \mrm d \a(p).
\end{eqnarray*}
Moreover, if  $\{(\g_n,\mu_n)\}_n$ is a sequence of weighted simple closed geodesics that converges to $(\g,\mu)$ in the space of measured laminations $\ml(S)$, then the derivatives of $\ell_{(\g_n,\mu_n)}$ converge pointwise towards the derivatives of $\ell_{(\g,\mu)}$ as $n$ tends to infinity: 
\begin{eqnarray*}
\lim_{n\rightarrow\infty}(\mrm d^k \ell_{(\g_n,\mu_n)})_{[m]}(\a_1,\ldots,\a_k) & = & (\mrm d^k \ell_\g)_{[m]} (\a_1,\ldots,\a_k),
\end{eqnarray*}
for any $k\geq 0$ and any $\a_1,\ldots,\a_k\in\Hold(\l;\R)$.
\end{theorem}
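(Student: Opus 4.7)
The plan is to use the density of weighted simple closed geodesics in $\ml(S)$. Choose a sequence $\{(\gamma_n, \mu_n)\}_n$ of weighted simple closed geodesics with $\mu_n = c_n \delta_{\gamma_n}$ converging to $(\gamma,\mu)$ in $\ml(S)$. By linearity, $\ell_{(\gamma_n, \mu_n)} = c_n \ell_{\gamma_n}$, so Theorem~1 yields $(\mrm d\ell_{(\gamma_n,\mu_n)})_{[m]}(\alpha) = c_n \int_{\gamma_n} \cos\theta_p \ \mrm d\alpha(p)$ for each $n$.

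The key step is to reinterpret this integral as the pairing of the geodesic current of $(\gamma_n, \mu_n)$ with a continuous function on the space of geodesics $G(\tilde S)$. A weighted simple closed geodesic lifts to a $\pi_1(S)$-invariant Radon measure on $G(\tilde S)$, and convergence in $\ml(S)$ amounts to weak-$\ast$ convergence of these currents. The function $\cos\theta_p$ extends to a continuous function of two transverse geodesics (cf.\ the cross-ratio computations at the end of Section~\ref{sec:distributions}). Writing $c_n \int_{\gamma_n} \cos\theta_p \ \mrm d\alpha(p)$ as an integral against the current of $\mu_n$ over a fundamental domain, weak-$\ast$ convergence gives pointwise convergence of the first derivative to $\int\int \cos\theta_p\ \mrm d\mu\ \mrm d\alpha(p)$. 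Combining this with continuity of the length function $\ml(S) \times \teich(S) \to \R$ and uniform bounds on derivatives over compacts in $\teich(S)$ (Arzel\`a--Ascoli), we identify the limit as $(\mrm d \ell_{(\gamma,\mu)})_{[m]}(\alpha)$.

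For the higher derivatives, I would invoke the recursive formulas of Lemma~\ref{lem:derivatives} and Lemma~\ref{lem:functions} in Section~\ref{sec:proof}, which express $(\mrm d^k \ell_{\gamma_n})_{[m]}$ as iterated integrals over $\gamma_n^k$ of smooth functions of the intersection data ($\cos\theta$, $\sin\theta$, $\cosh(\ell_{\gamma_n}/2 - \ell_{pq})$, etc.). The product currents $\mu_n^{\otimes k}$ converge weak-$\ast$ to $\mu^{\otimes k}$, the integrands extend to continuous functions on the appropriate open subsets of $G(\tilde S)^k$, and Fubini together with the same weak-$\ast$ argument yields $C^k$-convergence on compact subsets of $\teich(S)$, and in particular pointwise convergence of all higher derivatives.

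The main obstacle is obtaining the uniform bounds needed for the passage to the limit: the factors $\cosh(\ell_{\gamma_n}/2-\ell_{pq})$ can be large when $\ell_{pq}$ is close to $0$ or $\ell_{\gamma_n}$, so one must check that these contributions remain controlled (reflecting the fact that the original derivative is real-analytic). A secondary subtlety is giving a precise meaning to the double integral $\int\int \cos\theta_p \ \mrm d\mu\ \mrm d\alpha(p)$, either via Fubini along transverse arcs to $\lambda$ and to $\gamma$, or as a single integral against a naturally constructed measure on $\gamma\cap\lambda$ in the universal cover; the two interpretations should then be shown to agree.
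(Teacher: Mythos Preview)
Your overall strategy --- approximate $(\gamma,\mu)$ by weighted simple closed geodesics, apply the main theorem, pass to the limit --- is the same as the paper's.  But the crucial technical step is handled incorrectly, and the treatment of higher derivatives is confused.

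\medskip

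\textbf{The real obstacle is H\"older convergence, not weak-$\ast$ convergence.}  You write that ``weak-$\ast$ convergence gives pointwise convergence of the first derivative''.  But $\a$ is a H\"older distribution, not a Radon measure, so to integrate a function against $\a$ that function must be H\"older, and to pass to a limit under $\int\cdots\mrm d\a$ one needs convergence in a H\"older norm, not merely uniform convergence.  Weak-$\ast$ convergence of $\mu_n\to\mu$ on $G(\tilde S)$ gives at best uniform convergence of $l\mapsto\int f(g,l)\,\mrm d\mu_n(g)$ on compacta, which is not enough.  The paper fixes this by a definite order of integration: first integrate the smooth function $\cos\theta(g,l,m)$ against the \emph{measure} $\mu$ to obtain a function $F^R(m,l)$ that is smooth in $(m,l)$; then observe that, since $\mu_n\to\mu$ weakly, both $F_n^R$ \emph{and its $l$-derivative} converge uniformly on compacta, which forces Lipschitz (hence $1$-H\"older) convergence in $l$; only then is it legitimate to integrate against $\a$.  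The same remark applies to the derivatives in $m$.  This is the missing idea in your proposal, and it also resolves your ``secondary subtlety'': it is the very definition of the double integral (integrate in $\mu$ first over a train-track decomposition, then in $\a$), so it is primary, not secondary.

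\medskip

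\textbf{Higher derivatives.}  Your plan to use product currents $\mu_n^{\otimes k}$ is off target.  In the $k$-th derivative formula for a closed geodesic, the $k$-fold integral is against $\a^{\otimes k}$; the weight $c_n$ (that is, $\mu_n$) appears only once.  Moreover the integrands involve $\ell_{\gamma_n}$ and $\ell_{pq}$, which have no obvious extension to a non-closed leaf of a lamination (the paper explicitly flags this).  The paper sidesteps all of this: once $F(m)=\int\int\cos\theta\,\mrm d\mu\,\mrm d\a$ is shown to be smooth in $m$ (by Lemma~\ref{lem:functions}), and $F_n\to F$ in $C^\infty$ on compacta, the convergence of all higher derivatives of $\ell_{(\gamma_n,\mu_n)}$ follows immediately, since $F_n(m)=(\mrm d\ell_{(\gamma_n,\mu_n)})_{[m]}(\a)$ and its $m$-derivatives are the higher derivatives of $\ell_{(\gamma_n,\mu_n)}$.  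No explicit lamination formula for $\mrm d^k\ell$ is needed or obtained.

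\medskip

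Your concern about $\cosh(\ell_{\gamma_n}/2-\ell_{pq})$ being large is a red herring: on a closed geodesic $0\le\ell_{pq}\le\ell_{\gamma_n}$, so this factor is bounded by $\cosh(\ell_{\gamma_n}/2)$, which is controlled once the $\gamma_n$ approximate $(\gamma,\mu)$.  The genuine analytic issue is the one above.
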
  

\begin{remark}
To find a formula for the second derivative of $\ell_{(\g,\mu)}$, one has to extend $\ell_{pq}$ to measured laminations, and to study its regularity.
\end{remark}

\begin{question}
It would be interesting to look at the case of geodesic currents.
\end{question}

 In the first paragraph, we give sense to the integral above. In the other paragraphs, we show that the derivatives of $\ell_{(\g_n,\mu_n)}$ converge pointwise towards the derivatives  of $\ell_{(\g,\mu)}$ as $n$ tends to infinity. It is then easy to prove the formula.\par
 
 In what follows, we introduce some sets $U$ and $K_R$ that are different from the sets $U$ and $K$ seen previously. For simplicity, we assume that the geodesic lamination $\g$ is minimal.
   
\subsection*{Definition of the integral}
Let $U$ be the open subset of $G(\tilde S)\times G(\tilde S)$ consisting in couples of intersecting geodesics.
We consider a smooth function $f: \teich(S)\times U\rightarrow \R$, which is invariant with respect to the diagonal action of $\pi_1(S)$ on $U$.
Let us give sense to the following integral:
\begin{eqnarray*}
\int_{l\in \l}\int_{g\in \g}   f(m,g,l)\ \mrm d \mu(g) \mrm d\a(l ) ,
\end{eqnarray*}

We fix an oriented geodesic arc $s$, which is disjoint  from $\l$ and intersects $\g$ transversely. We choose $s$ pointing towards a cusp of $S-\l$. The closure of each component of $\g-s$ is a geodesic arc with endpoints in $s$. We partition these arcs according to their isotopy class relative to $s$. We denote by $\mc R$ the set of such isotopy classes, which is finite since all these arcs are pairwise disjoint.\par

 We fix a class $R\in \mc R$, and we choose two lifts $\tilde s_1$ and $\tilde s_2$ of $s$ to $\tilde S$, such that any geodesic arc in $R$ admits a lift with endpoints in $\tilde s_1$ and $\tilde s_2$. We denote by $K_R\subset G(\tilde S)$ the set of geodesics that intersects $\tilde s_1$ and $\tilde s_2$. We call $h_1$ and $h_2$ the geodesics supporting $\tilde s_1$ and $\tilde s_2$. We denote by $V_R$ the open subset of $G(\tilde S)$ that consists in the geodesics lying between $h_1$ and $h_2$ that intersect any geodesic in $K_R$. We define $F^R:\teich(S)\times V_R\rightarrow \R$ as follows:
\begin{eqnarray*}
F^R(m,l) & = & \int_{\tilde \g\cap K_R} f(m,g,l)\ \mrm d\mu(g).
\end{eqnarray*}
This integral is well defined ($\tilde \g\cap K_R$ is a compact subset of $G(\tilde S)$) and does not depend on the choice of any lift. From a classical theorem of Lebesgue, the function $F^R$ is smooth. Thus $l\mapsto F^R(m,l)$ is H\"older, and $\int_{\tilde \l\cap V^R}  F^R(m,l)\mrm d \a$ is well-defined. We set 
$$
F(m)\ :=\ \int_{l\in \l}  \int_{g\in \g}f(g,l,m)\ \mrm d \mu(g)\ \mrm d\a(l )\  :=\  \sum_{\mc R}\int_{\tilde \l\cap V_R}  F^R(m,l)\ \mrm d \a(l).
$$
The function $F:\teich(S)\rightarrow \R$ is smooth (Lemma\ref{lem:functions}).

\subsection*{From simple closed geodesics to measured laminations}
We consider a sequence $\{(\g_n,\mu_n) \}_n$ of weighted simple closed geodesics that converges to $(\g,\mu)$ in $\ml(S)$. We assume that each geodesic $\g_n$ intersects $s$, and that the arcs made of the components of $\g_n-s$ belong to the classes in $\mc R$. The existence of such a sequence is obvious when working in the train track which has a unique switch correponding to $s$, and one edge for each $R\in \mc R$.\par

 As above, we have a well-defined and smooth function $F_n^R: \teich(S)\times V_R\rightarrow\R$ given by:
$$F^R_n(m,l)=\int_{g\in K_R} f(m,g,l) \mrm d\mu_n(g).$$ 
The restriction of $\mu_n$ to $K_R$ is a linear combination of Dirac measures, so this integral is actually a finite sum. As $\mu_n$ tends to $\mu$ in the space of Radon measures on $G(\tilde S)$, it comes that $F_n^R$ converges to $F^R$ uniformly on compact subsets of $\teich(S)\times V_R$. Similarly, any derivative of the form $(\partial_m^k F_n^R)_{(m,l)}(\a_1,\ldots,\a_k)$ ($k\geq 0$ and $\a_1,\ldots,\a_k\in\Hold(\l;\R)$) converges towards $(\partial_m^k F^R)_{(m,l)}(\a_1,\ldots,\a_k)$ uniformly on compact subsets of $\teich(S)\times V_R$. This implies the $1$-H\"older convergence of all derivatives, for the Lipschitz constant of a smooth function is equal to the supremum of its derivative. We deduce that 
\begin{eqnarray}
(\mrm d^k F)_{[m]}(\a_1,\ldots,\a_k) & = & \lim_{n\rightarrow\infty} (\mrm d^k F_n)_{[m]} (\a_1,\ldots,\a_k),\label{eq:conv-lam}
\end{eqnarray}
for any $k\geq 0$ and any $\a_1,\ldots,\a_k\in\Hold(\l;\R)$.
 
\subsection*{Proof of Theorem~\ref{thm:laminations}}
We take $f$ defined by $f(g,l,m)=\cos\theta(g,l,m)$, where $\theta(g,l,m)$ is the $\tilde m$-angle going from $g$ to $h$ following the orientation of $\tilde S$. According to our main theorem we have $(\mrm d \ell_{(\g_n,\mu_n)})_{[m]}(\a)=F_n(m)$. And, in the paragraph above, we have seen that 
$\lim_{n\rightarrow\infty} F_n(m)\ =\ F(m)$, where the convergence is uniform on compact subsets of $\teich(S)$. As $(\ell_{(\g_n,\mu_n)})_n$ converges pointwise to $\ell_{(\g,\mu)}$, we conclude that
$(\mrm d\ell_{(\g,\mu)})_{[m]}(\a)\ =\ F(m).$\par
We remark that the formula~\eqref{eq:conv-lam} gives the pointwise convergence of all derivatives of $\ell_{(\g_n,\mu_n)}$ towards the derivatives  of $\ell_{(\g,\mu)}$ as $n$ tends to infinity.

\section{Positivity of the Hessian}\label{sec:hessian}

In this section, we give a proof of Theorem~\ref{thm:2}. We interpret 
$$\int_\g\int_\g\cosh\left(\frac{\ell_\g}{2}-\ell_{pq}\right) \sin \theta_p \sin \theta_q    \ \mrm d \a_n^2,$$
as a quadratic form evaluated at $\a_n$, seen as a vector in some finite dimensional linear space. Using an easy lemma of linear algebra, we find an effective lower bound for the integral above. This lower bound increases with $n$. 

\subsection*{A lemma on symmetric matrices with nonnegative entries}
This lemma is certainly well-known, but we were not able to find it in the litterature.
\begin{lemma}
Let $(a_{ij})_{1\leq i,j\leq n}$ be a symmetric matrix satisfying the following conditions:
\begin{enumerate}[i)]
\item $a_{ij}\geq 0$ for all $i,j$ ($A$ is nonnegative),
\item $a_{ii}>a_{ij}$ for all $i,j$.
\end{enumerate}
Then $A$ is definite-positive, that is $x^tAx> 0$ for any $x\in \R^n-\{0\}$.
\end{lemma}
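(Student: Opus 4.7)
The plan is to prove positive-definiteness by bounding the quadratic form $x^tAx$ directly from below. First observe that the diagonal entries are strictly positive: for any $i$ and any $j\neq i$ (such $j$ exists as soon as $n\geq 2$), conditions (i)–(ii) give $a_{ii}>a_{ij}\geq 0$. So the diagonal part of $x^tAx$ contributes $\sum_i a_{ii} x_i^2>0$ for $x\neq 0$, and the whole task is to control the off-diagonal sum $2\sum_{i<j}a_{ij}x_ix_j$.

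I would first dispatch the small cases to isolate the mechanism. For $n=1$ the claim is trivial. For $n=2$ one has $\det A= a_{11}a_{22}-a_{12}^2$, and by (ii) $a_{12}<\min(a_{11},a_{22})\leq \sqrt{a_{11}a_{22}}$, so $\det A>0$; combined with $a_{11}>0$, this gives positive-definiteness. This is the entire mechanism at $n=2$: the strict inequality in (ii) is used to beat the product of diagonal entries by the off-diagonal squared.

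For general $n$, I would push on two fronts in parallel. The first is a polarization estimate: using the weighted AM--GM inequality $2|x_ix_j|\leq \alpha_{ij}x_i^2+\alpha_{ij}^{-1}x_j^2$ with weights $\alpha_{ij}>0$ to be chosen, one gets
\[
x^tAx\ \geq\ \sum_i x_i^2\Bigl(a_{ii}-\tfrac{1}{2}\sum_{j\neq i}\bigl(a_{ij}\alpha_{ij}+a_{ji}\alpha_{ji}^{-1}\bigr)\Bigr),
\]
and one tries to choose the $\alpha_{ij}$'s from (i)–(ii) so that each bracket is strictly positive. The natural guess $\alpha_{ij}=\sqrt{a_{jj}/a_{ii}}$ turns $a_{ij}\alpha_{ij}$ into something bounded by $\sqrt{a_{ii}a_{jj}}$, which is the tightest form of (ii). The second avenue is an eigenvector argument: let $\lambda_{\min}$ be the smallest eigenvalue of $A$ with unit eigenvector $v$, choose an index $k$ where $|v_k|=\|v\|_\infty$, and read off the $k$th coordinate of $Av=\lambda_{\min}v$ to obtain
\[
\lambda_{\min}\ =\ a_{kk}+\sum_{j\neq k}a_{kj}\tfrac{v_j}{v_k}\ \geq\ a_{kk}-\sum_{j\neq k}a_{kj}|v_j|,
\]
and then try to exploit the strict inequalities $a_{kj}<a_{kk}$ at those indices $j$ where $|v_j|<1$ to extract positivity.

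The main obstacle, I expect, is precisely the gap between the pairwise dominance granted by (ii) and the summed strict diagonal dominance $a_{ii}>\sum_{j\neq i}a_{ij}$ that both naive estimates above would love to have. Hypothesis (ii) controls each off-diagonal entry individually, but for large $n$ there are $n-1$ such entries per row, and turning a pointwise strict bound into a summed strict bound is the technical heart of the matter. I would look for a sharpening through either a case analysis on the support of an extremal $x$, or a Perron--Frobenius type argument applied to the nonnegative off-diagonal matrix $B=A-D$, whose largest eigenvalue is bounded by $\max_i\sum_{j\neq i}a_{ij}$ and admits a nonnegative eigenvector; comparing that eigenvalue to $\min_i a_{ii}$ and using strict inequality in (ii) on the coordinate achieving the maximum looks to me like the most promising route.
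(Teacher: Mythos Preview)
Your instinct about the obstacle is exactly right, and in fact the obstacle is fatal: the lemma as stated is \emph{false}. Take
\[
A=\begin{pmatrix}1&9/10&9/10\\ 9/10&1&0\\ 9/10&0&1\end{pmatrix}.
\]
All entries are nonnegative and each diagonal entry strictly dominates every off-diagonal entry in its row, so conditions (i) and (ii) hold. Yet for $x=(1,-\tfrac12,-\tfrac12)$ one gets $x^tAx=\tfrac32-\tfrac{9}{10}-\tfrac{9}{10}=-\tfrac{3}{10}<0$; equivalently, the characteristic polynomial factors as $(1-\lambda)\bigl((1-\lambda)^2-\tfrac{81}{50}\bigr)$, giving a negative eigenvalue $1-\tfrac{9}{5\sqrt{2}}\approx -0.27$. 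So neither your weighted AM--GM bound nor your eigenvector/Gershgorin estimate can be pushed through, because no argument can close the gap you correctly identified between the pairwise dominance $a_{ii}>a_{ij}$ and the row-sum dominance $a_{ii}>\sum_{j\neq i}a_{ij}$.

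The paper's own proof, via Gaussian elimination and Sylvester's criterion, breaks at the unjustified assertion that ``any diagonal entry $a^{[k]}_{ii}$ remains greater than any other entry of the same line'': one elimination step on the matrix above turns rows~$2$ and~$3$ into $(0,\tfrac{19}{100},-\tfrac{81}{100})$ and $(0,-\tfrac{81}{100},\tfrac{19}{100})$, and the next pivot is negative. What actually rescues the application is that the specific matrix $H_{ij}=\cosh\bigl(\ell_\gamma/2-\ell_{p_ip_j}\bigr)$ is positive definite for a structural reason independent of the lemma: on the circle of length $\ell_\gamma$, the kernel $\frac{1}{2\sinh(\ell_\gamma/2)}\cosh\bigl(\ell_\gamma/2-d(s,t)\bigr)$ is the Green's function of the positive operator $-\tfrac{d^2}{dt^2}+1$, and hence is positive definite on any finite set of distinct points. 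That is the argument to substitute here.
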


\begin{consequence}\label{cons:linear}
If we replace the condition $ii)$ by the weaker condition $a_{ii}\geq a_{ij}$ ($\forall i, j$), then we find that $A$ is positive, that is $x^tAx\geq 0$ for any $x\in \R^n$. Thus, for any $x\in \R^n$ we have 
$x^tAx  \geq d_1 x_{11}^2+\ldots+ d_nx_{nn}^2$,
 where $d_i=\min\{x_{ii}-x_{ij}~;~ j\neq i\}$.
\end{consequence}

\begin{proof}
We perform the Gauss elimination process to get an upper-triangular matrix which has same leading principal minors as $A$. Let us recall that this process consists in $n$ steps, and that the $k^{\textnormal{th}}$ step gives a matrix $A^{[k+1]}$, obtained from $A^{[k]}$ by doing the operation 
$$L_i-\frac{a^{[k]}_{ik}}{a^{[k]}_{kk}} L_k\rightarrow L_i$$
 for any $i>k$. This operation preserves the leading principal minors. By construction, any diagonal entry $a^{[k]}_{ii}$ of $A^{[k]}$ ($k\geq 2$) remains greater than any other entry of the same line, in particular is positive as $0=a^{[k]}_{i1}$. We conclude that the upper-triangular matrix $A^{[n+1]}$ has positive diagonal entries, thus all its leading principal minors are positive, and $A$ is positive according to Sylvester criterion.
\end{proof}

\subsection*{Positivity of the Hessian}

\begin{lemma}
Let $\g$ be a closed geodesic of $(S,m)$. Let $x_1,\ldots, x_N$ be isolated points of $\g\cap \l$, and denote by $\e_i$ be the minimal distance on $\g$ between $x_i$ and any other point of $\g\cap \l$. For any transverse H\"older distribution $\a\in \Hold(\l;\R)$, we have
\begin{eqnarray*}
(\mrm d^2\ell_\g)_{[m]}(\a^2) & \geq & \sum_{i=1}^N \sinh(\ell_\g/2-\e_i)\ \e_i\  \a(x_i)^2\ \sin^2\theta_{x_i},
\end{eqnarray*}
where $\a(x_i)$ is the $\a$- measure of a small arc of $\g$ that intersects $\l$ only at $x_i$.
\end{lemma}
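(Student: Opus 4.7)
The plan is to reduce the inequality to a finite-dimensional positivity statement using the local approximation Lemma~\ref{lem:convergence}, and then to invoke the linear algebra Consequence~\ref{cons:linear} at each of the isolated points $x_i$.

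First I would approximate $\a$, restricted to a lift of $\g$ in the universal cover, by the sequence $(\a_n)_n$ of finite linear combinations of Dirac masses constructed in Section~\ref{sec:convergence}. By a suitable choice of the exhausting partitions $\Pl_n$, I would arrange that for $n$ large each $x_i$ occurs as an isolated support point of $\a_n$ with coefficient exactly $\a(x_i)$. This is possible because $x_i$ is isolated in $\g\cap\l$: a small transverse arc isolating $x_i$ has $\a$-mass equal to $\a(x_i)$, which is precisely the weight Bonahon's construction assigns to the leaf of $\tilde\l$ through $x_i$ once the two adjacent components of $\tilde S-\tilde\l$ belong to $\Pl_n$.

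Writing $\a_n=\sum_j c_j\delta_{p_j}$ and plugging into the formula for the second derivative from the main theorem, I would obtain
\begin{eqnarray*}
(\mrm d^2\ell_\g)_{[m]}(\a_n^2) & = & \frac{1}{2\sinh(\ell_\g/2)}\,\tilde c^{\,T}B_n\tilde c,
\end{eqnarray*}
with $\tilde c_j=c_j\sin\theta_{p_j}$ and $B_n=(\cosh(\ell_\g/2-\ell_{p_jp_k}))_{j,k}$. This matrix is symmetric with nonnegative entries, and its diagonal $\cosh(\ell_\g/2)$ strictly dominates the off-diagonals, since $\ell_{p_jp_k}\in(0,\ell_\g)$ for $j\neq k$. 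Applying Consequence~\ref{cons:linear} and retaining only the contributions coming from the indices $j$ with $p_j=x_i$ (the remaining terms being nonnegative) would give
\begin{eqnarray*}
\tilde c^{\,T}B_n\tilde c & \geq & \sum_{i=1}^{N}\bigl[\cosh(\ell_\g/2)-\cosh(\ell_\g/2-\e_i^{(n)})\bigr]\,\a(x_i)^2\sin^2\theta_{x_i},
\end{eqnarray*}
where $\e_i^{(n)}$ denotes the distance along $\g$ from $x_i$ to the nearest other support point of $\a_n$. Isolation of $x_i$ in $\g\cap\l$ forces $\e_i^{(n)}\geq\e_i$, and hence $\cosh(\ell_\g/2-\e_i^{(n)})\leq\cosh(\ell_\g/2-\e_i)$.

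Finally the elementary identity $\cosh A-\cosh B=2\sinh\frac{A+B}{2}\sinh\frac{A-B}{2}$, combined with $\sinh t\geq t$ for $t\geq 0$ and the monotonicity of $\sinh$, bounds each diagonal excess from below by a quantity of order $\sinh(\ell_\g/2-\e_i)\,\e_i$, producing the asserted inequality after passing to the limit $n\to\infty$. The passage to the limit relies on the two-variable analog of Lemma~\ref{lem:functions} (obtained by iterating the one-variable version in each argument), which ensures $(\mrm d^2\ell_\g)(\a_n^2)\to(\mrm d^2\ell_\g)(\a^2)$, while the lower bound is constant in $n$ by construction. The main delicacy is the first step: arranging the approximating sequence $(\a_n)_n$ so that each $x_i$ carries exactly its $\a$-mass in $\a_n$, which is where the isolation hypothesis is genuinely used; once this is in place, the rest is linear algebra and routine hyperbolic trigonometry.
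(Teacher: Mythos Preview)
Your proposal is correct and follows essentially the same approach as the paper: approximate $\a$ on a lift of $\g$ by the finite Dirac combinations $\a_n$, interpret the double integral as a quadratic form $B^T H B$ with $H_{ij}=\cosh(\ell_\g/2-\ell_{p_ip_j})$, apply Consequence~\ref{cons:linear} to split off a positive diagonal piece at the isolated leaves, and pass to the limit using the $n$-independence of the lower bound. The only cosmetic differences are that the paper defines $\e_i$ for \emph{all} support points of $\a_n$ (taking it to be zero at non-isolated leaves) rather than introducing your $\e_i^{(n)}$, and derives the estimate $\cosh(\ell_\g/2)-\cosh(\ell_\g/2-\e_i)\geq\sinh(\ell_\g/2-\e_i)\,\e_i$ directly from convexity rather than via the product formula for $\cosh A-\cosh B$.
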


\begin{proof}
Following the notations of \textsection2 and \textsection 3, we fix a geodesic segment $k\subset \tilde S$ which is a lift of $\g$ whose endpoints belong to two components $P$ and $Q$ of $\tilde S-\tilde \l$. To any intersection point $x_i$ corresponds a unique isolated leaf $p_i$ of $\tilde \l$ that separates $P$ from $Q$.\par

Each $p_i$ is adjacent to two components of $\tilde S-\tilde \l$. For $n$ big enough, the set $\Pl_n$ contains all the components adjacent to the $p_i$'s. Thus, the corresponding H\"older distribution $\a_n$ can be written in the form $\a_n=\sum_{i=1}^n b_i \delta_{p_i}$ with $b_i=\a(x_i)$ for $i\leq N$, and $p_i\neq p_j$ for $i\neq j$. We explicitly have
\begin{eqnarray*}
\int_{k^2} \cosh(\ell_\g/2-\ell_{pq})\sin\theta_p\sin\theta_q\ \mrm d \a_n^2 & = & B^t H B,
\end{eqnarray*}
 where $H$ is the $n\times n$ symmetric matrix given by $H_{ij}=\cosh(\ell_\g/2-\ell_{p_ip_j})$, and $B$ is the vector $B=(b_1\sin\theta_{p_1},\ldots,b_n\sin\theta_{p_n})$.
Let $\e_i$ be the minimal distance on $k$ between $p_i$ and any other leaf of $\tilde \l$. Note that $\e_{i}$ is positive if and only if $p_i$ is an isolated leaf of $\tilde \l$. For any $p_i$ ($1\leq i\leq n$) and any $q\in\tilde \l$ we have 
$$\cosh(\ell_\g/2) -\cosh(\ell_\g/2-\ell_{p_iq})\ \geq\ \cosh(\ell_\g/2) -\cosh(\ell_\g/2-\e_i)\ \geq\ \sinh(\ell_\g/2-\e_i) \e_i.$$
We write $H=H'+D$ where $D$ is the diagonal matrix with $D_{ii}=\sinh(\ell/2-\e_{i})\e_{i}$. According to Consequence~\ref{cons:linear}, the matrix $H'$ is positive and
\begin{eqnarray*}
\int_{k^2} \cosh(\ell_\g/2-\ell_{pq})\sin\theta_{p}\sin\theta_q\ \mrm d \a_n^2 & \geq & \sum_{i=1}^n D_{ii} B_i^2,\\
 & \geq & \sum_{i=1}^N \sinh(\ell_\g/2-\e_{i})\e_{i}\ \a(x_i)^2\ \sin^2\theta_{p_i}.
 \end{eqnarray*}
This proves the lemma as the right-hand side of the inequality does not depend on $n$.
\end{proof}

Using the Theorem~\ref{thm:laminations}, we easily extend the lemma above to measured laminations by considering a sequence of simple closed geodesics converging to $(\g,\mu)$. 

\begin{proposition}\label{lem:hessian}
Let $s_1,\ldots, s_N$ be disjoint geodesic arcs of $(S,m)$ that are supported by isolated leaves of $\l$. For each $i$, we denote by $\e_i>0$ the biggest $\e>0$ such that any geodesic arc of length $\e$ starting at a point in $s_i$ has no other intersection point with $\l$ (except if it is contained in $\l$). Let $(\g,\mu)$ be a measured lamination. For any $\a\in\Hold(\l;\R)$ we have
\begin{eqnarray*}
(\mrm d^2\ell_{(\g,\mu)})_{[m]}(\a^2) & \geq & \sum_{i=1}^N \sinh(\ell_{(\g,\mu)}/2-\e_i)\ \e_i\  \a(s_i)^2\ \int_{s_i} \sin^2\theta_{x} \mrm d\mu(x),
\end{eqnarray*}
where $\a(s_i)$ is the $\a$-measure of a small geodesic arc intersecting $s_i$.
\end{proposition}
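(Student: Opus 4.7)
The strategy, following the author's hint, is to choose a sequence of weighted simple closed geodesics $(\g_n,\mu_n)\to(\g,\mu)$ in $\ml(S)$, apply the preceding lemma to each $\g_n$, and pass to the limit using the convergence of derivatives furnished by Theorem~\ref{thm:laminations}. One may arrange that $\mu_n\leq 1$ for all $n$; in the typical case where $(\g,\mu)$ is not a weighted simple closed curve this is automatic, since $\ell_{\g_n}\to\infty$ forces $\mu_n\to 0$ (the weighted closed-curve case is handled directly by the preceding lemma).

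For each $n$, I would apply the preceding lemma to $\g_n$ with the ``isolated points'' chosen to be the intersections $\g_n\cap\bigcup_i s_i$. Two observations identify the data to that of the proposition. First, for $x\in\g_n\cap s_i$ the $\a$-mass $\a(x)$ equals $\a(s_i)$, since both measure the mass of $\a$ on a transversal crossing the common isolated leaf supporting $s_i$. Second, by the definition of $\e_i$ in the proposition, the minimum distance on $\g_n$ from $x$ to the next point of $\g_n\cap\l$ is at least $\e_i$; inspection of the lemma's proof shows that replacing the local minimum distance by any smaller positive value still yields a valid lower bound (the diagonal dominance $H'_{ii}\geq H'_{ij}$ used via Consequence~\ref{cons:linear} is preserved). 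This produces
\begin{equation*}
(\mrm d^2\ell_{\g_n})_{[m]}(\a^2)\ \geq\ \sum_{i=1}^N \sinh(\ell_{\g_n}/2-\e_i)\,\e_i\,\a(s_i)^2 \sum_{x\in\g_n\cap s_i}\sin^2\theta_x.
\end{equation*}

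Multiplying by $\mu_n$ and using $\ell_{(\g_n,\mu_n)}=\mu_n\ell_{\g_n}\leq\ell_{\g_n}$ together with the monotonicity of $\sinh$ on $[0,\infty)$ (the only place where $\mu_n\leq 1$ is used), one obtains the proposition's bound for the weighted closed geodesic $(\g_n,\mu_n)$, after observing that $\int_{s_i}\mrm d\mu_n=\mu_n\sum_{x\in\g_n\cap s_i}$. Letting $n\to\infty$, the left-hand side converges by Theorem~\ref{thm:laminations}; the factor $\sinh(\ell_{(\g_n,\mu_n)}/2-\e_i)$ converges by continuity of the length function on $\ml(S)$; and the convergence $(\g_n,\mu_n)\to(\g,\mu)$ in $\ml(S)$, together with continuity of $x\mapsto\sin^2\theta_x$ on the compact arc $s_i$, gives $\mu_n\sum_{x\in\g_n\cap s_i}\sin^2\theta_x\to\int_{s_i}\sin^2\theta_x\,\mrm d\mu(x)$. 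The main obstacle is the intermediate bookkeeping step --- matching the lemma's local data to the proposition's arc-based data and converting $\ell_{\g_n}$ into $\ell_{(\g_n,\mu_n)}$ in the $\sinh$ factor via the condition $\mu_n\leq 1$. Everything else is a routine passage to the limit.
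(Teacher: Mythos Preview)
Your approach is exactly the one the paper indicates: approximate $(\g,\mu)$ by weighted simple closed geodesics, apply the preceding lemma, and pass to the limit via Theorem~\ref{thm:laminations}. The paper's own proof is a single sentence pointing to precisely this scheme, so your proposal is essentially a faithful elaboration.

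Two small comments on the details. First, the parenthetical remark that the weighted closed-curve case ``is handled directly by the preceding lemma'' is not quite right when the weight exceeds $1$: the lemma's bound involves $\sinh(\ell_\g/2-\e_i)$, not $\sinh(c\ell_\g/2-\e_i)$, and the inequality goes the wrong way. This is easily repaired by noting that even a weighted simple closed curve can be approximated in $\ml(S)$ by \emph{other} simple closed curves with weights tending to $0$, so your main argument covers this case too. Second, in the final limit you write ``continuity of $x\mapsto\sin^2\theta_x$ on $s_i$'', but $\theta_x$ depends on $\g_n$ as well; the convergence $\int_{s_i}\sin^2\theta_x^{(n)}\,\mrm d\mu_n\to\int_{s_i}\sin^2\theta_x\,\mrm d\mu$ really uses that the angle is a continuous function of the pair (point of $s_i$, transverse geodesic) together with convergence of $(\g_n,\mu_n)$ as geodesic currents --- this is the viewpoint already set up in \S\ref{sec:laminations}. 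Neither point affects the overall correctness of your sketch.
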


\section{Shearing along one geodesic}\label{sec:computations}

 In the hyperbolic plane $\Hyp$, we consider two disjoint non asymptotic geodesics $h$ and $h'$. We fix an isometry $\g$ whose axis $g$ intersect both geodesics, and such that $\g(h)=h'$. The geodesic $g$ has an orientation given by $\g$, and we orient every geodesic intersecting $g$ in such a way that it points to the left when crossed by $g$. Let $\f(t)$ be the isometry of $\Hyp$ that translates by a signed length $t$ on $h'$. We set $\g(t):=\f(t)\circ\g$, and we denote by $g(t)$ the axis of $\g(t)$.

The quotient $\Hyp/\langle \g(t) \rangle$ is a hyperbolic annulus. By construction, both geodesics $h$ and $h'$ project onto the same complete simple geodesic, and the deformation $t\mapsto \Hyp/\langle\g(t) \rangle$ is obtained by shearing along this geodesic.\par

 Given a geodesic $l$ of $\Hyp$ lying between $h$ and $h'$ and intersecting $g$, we want to compute the variations of the intersection point $g(t)\cap l$, and of the angle $\theta_l(t)$ between $g(t)$ and $l$. This enables the computation of the first and second derivatives of the translation length $\ell(t):=\ell(\g(t))$. We stress that $h$, $h'$ and $l$ are fixed, but that the axis $g(t)$ moves with $t$.\par
 
\subsection*{Description of the configuration}
  Let $[a,a']$ be the unique geodesic segment orthogonal to $h$ and $h'$ at its endpoints. Given a geodesic $l$ lying between $h$ and $h'$ and intersecting $g$, we denote by $f_l(t)$ the signed distance on $l$ between $l\cap[a,a']$ and $l\cap g(t)$.\par
  
  As $h'$ is the image of $h$ by any $\g(t)$, we have $\theta_h(t)=\theta_{h'}(t)$ for any $t$. Thus the points $a$ and $a'$ are at the same distance from $g(t)$ on respectively $h$ and $h'$. It follows that $f_{h}(t)=f_{h}(0)-\frac{t}{2}$ and $f_{h'}(t)=f_{h'}(0)+\frac{t}{2}$, in particular
\begin{eqnarray}
 f_{h}'  & \equiv &-  \frac{1}{2}.\label{eq:fh}
\end{eqnarray}
This implies that the midpoint $M$ of $[a,a']$ belongs to $g(t)$ independently of $t$. Therefore $g(t)$ is obtained by rotating $g(0)$ by an angle $\rho(t)$ about $M$. 

\begin{figure}[h]
\labellist
\small\hair 2pt
\pinlabel $h$ at 70 460
\pinlabel $h'$ at 360 460
\pinlabel $l$ at 210 460
\pinlabel $g(t)$ at 260 500
\pinlabel $M$ at 210 570
\pinlabel $a$ at 65 560
\pinlabel $a'$ at 340 562
\pinlabel $\theta_h(t)$ at 95 650
\endlabellist
 \centering\includegraphics[height=4cm]{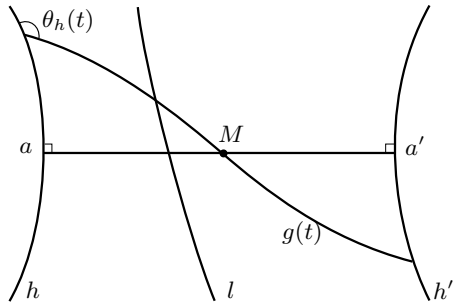}
\caption{Configuration}
\end{figure} 

\subsection*{Variation of the translation length}
The translation length $\ell$ is equal to the distance between the intersection point of $g(t)$ with $h$ and $h'$. The gradient field of the function $d_\Hyp(M,\cdot)$ consists in the unit vectors pointing in the direction opposite to $M$. We deduce easily from \eqref{eq:fh} that
 \begin{eqnarray}
 \ell' & = &  \cos\theta_h .
 \end{eqnarray}
 It is geometrically clear that $\theta_h$ is strictly increasing, so we already find $\ell''\geq0$.
 
\subsection*{Variation of the intersection point}
 We are interested in the trajectory of the point $g(t)\cap l$ on the fixed geodesic $l$.We recall that $g(t)$ is obtained by rotating $g(0)$ by an angle $\rho(t)$ about the midpoint of $[a,a']$. As well-known, if $p(t)$ is a point moving by an angle $\rho(t)$ on a circle of radius $r$ in $\Hyp$, then its velocity $p'(t)$ is equal to $\rho'(t)\sinh(r) $ times the vector tangent to the circle at $p(t)$ pointing in the positive direction. This gives 
\begin{eqnarray*}
 f_l'  & = &\rho'\  \frac{\sinh\left(\frac{\ell}{2}-\ell_{hl}\right)\ }{\sin\theta_l} ,\\
\end{eqnarray*}
where $\ell_{hl}(t)$ is the distance on $g(t)$ from $h$ to $l$. Taking $l=h$ we find
\begin{eqnarray}
 f_{h}'   & = & \rho'\ \frac{\sinh(\frac{\ell}{2})}{\sin\theta_{h}}. \label{eq:fh2}
\end{eqnarray}
Equations \eqref{eq:fh} and \eqref{eq:fh2} enable us to express the angular velocity $\rho'$ in terms of $\ell$ and $\theta_{h}$. We finally obtain:
\begin{eqnarray}
f_l ' & = &-  \frac{1}{2}  \frac{\sin\theta_h}{ \sin\theta_l} \frac{\sinh(\frac{\ell}{2}-\ell_{hl})}{\sinh\frac{\ell}{2}} .
\end{eqnarray}

\subsection*{Variation of the distance between two intersection points}
Let $l'$ be another geodesic of $\Hyp$ lying between $l$ and $h'$, and intersecting $g$. As we know the gradient of the distance function $d_\Hyp(\cdot,\cdot)$, and the variation of each of the intersection points $l\cap g(t)$ and $l'\cap g(t)$, we easily compute 
\begin{eqnarray}
\ell'_{ll'} &= & \frac{\sin\theta_h}{2\sinh\frac{\ell}{2}} \left(\sinh(\frac{\ell}{2}-\ell_{hl'})\cot\theta_{l'} - \sinh(\frac{\ell}{2}-\ell_{hl})\cot\theta_{l} \right)\label{eq:distance}
\end{eqnarray}

\subsection*{Variation of the angle, second variation of the translation length}
We now consider only $l$. We remark that 
\begin{eqnarray*}
\cos\theta_l(t) & = &\pm \langle l'_{p(t)}, Z_{p(t)} \rangle,
\end{eqnarray*}
where $p(t)=l\cap \g(t)$, $l'$ is the unit vector field tangent to $l$, and $Z$ is the unit vector field pointing in the direction of the midpoint $M$ of $[a,a']$. Note that we have $p'(t)=f_l'(t) l'_{p(t)}$. As $l'$ is parallel, we find
\begin{eqnarray*}
\pm\frac{\mrm d \cos\theta_l}{\mrm d t} (t) & = & \langle (\nabla_{f_l' l'}l')_{p(t)}, Z_{p(t)} \rangle + \langle l'_{p(t)}, (\nabla_{f'_ll'}Z)_{p(t)} \rangle \\
& = & f_l' \langle l', (\nabla_{l'} Z)_{p(t)} \rangle 
\end{eqnarray*}
By definition of Jacobi fields we have:
\begin{eqnarray*}
(\nabla_{l'}Z)_{p(t)} &= & -J'(d_\Hyp(M,p(t))),
\end{eqnarray*}
where $s\mapsto J(s)$ is the Jacobi field along the geodesic segment $[M,p(t)]$ satisfying $J(0)=0$ and $J(d(M,p(t))=l'_{p(t)}$. According to the classical formula for the norm of Jacobi fields in hyperbolic spaces we have:
\begin{eqnarray*}
\|J'(s)^\perp \| & = & \sin\theta_{l}\ \frac{\sinh(s)}{\sinh(d_\Hyp(M,p(t)))},
\end{eqnarray*}
where $\perp$ means the component orthogonal to $[M,p(t)]$. We finally find  
\begin{eqnarray*}
\frac{\mrm d \cos\theta_l}{\mrm d t} & = &- f'_l  \ \sin^2\theta_l\  \coth(d_\Hyp(M,g\cap l)), \\
& = & \frac{1}{2} \     \frac{\cosh(\frac{\ell}{2}-\ell_{hl})}{\sinh\frac{\ell}{2}} \        \sin\theta_h \sin\theta_l.
\end{eqnarray*}
This is also equal to the second variation of the translation length because $\ell'=\cos\theta_h$. We have already proved that $\ell''\geq 0$, so there is no sign issue.


\bibliographystyle{alpha}
\bibliography{biblio}

\end{document}